\newtheorem{theoremA}{Theorem}
\renewcommand{\thetheoremName}
\newtheorem*{thmain}{Main Theorem}
\newtheorem{proposition[[]]}[theoremName]{Proposition G}
\newtheorem{theorem}{Theorem}[section]
\newtheorem{lemma}[theorem]{Lemma}
\newtheorem{proposition}[theorem]{Proposition}
\theoremstyle{definition}
\newtheorem{definition}[theorem]{Definition}
\newtheorem{remark}{Remark}
\numberwithin{equation}{section}
\newcommand{\qedd}{\hfill \Box}
\newcommand{\Vol}{\operatorname{Vol}}
\newcommand{\kan}{\mathbb{K}^{n}(\kappa)}
\newcommand{\erre}{\mathbb{R}}
\newcommand{\K}{\kappa}
\newcommand{\Ct}{\text{Ct}_\kappa}
\newcommand{\Q}{\mathcal{Q}}
\newcommand{\Rd}{\frac{R}{2}}
\newcommand{\Han}{\mathbb{H}^n(\kappa)}
\newcommand{\Ham}{\mathbb{H}^m(\kappa)}
\newcommand{\grad}{\operatorname{\nabla}}
\newcommand{\Ss}{\Vol(S_s^{\kappa})}
\newcommand{\Sr}{\Vol(S_R^{\kappa})}
\newcommand{\Bs}{\Vol(B_s^{\kappa})}
\newcommand{\Br}{\Vol(B_R^{\kappa})}
\begin{document}

\title[On the  fundamental tone of  minimal submanifolds]{On the  fundamental tone of  minimal submanifolds with controlled  extrinsic curvature
}


\author{Vicent Gimeno         
}

\address{Department of Mathematics, Universitat Jaume I, Castell\'o de la Plana, Spain                        
}
\email{vigigar@postal.uv.es}


\begin{abstract}
The aim of this paper is to obtain the fundamental tone for minimal submanifolds of the Euclidean or hyperbolic space under certain restrictions on the extrinsic curvature. We show some sufficient conditions on the norm of the second fundamental form that allow us to obtain the same upper and lower bound for the fundamental tone of minimal submanifolds in a Cartan-Hadamard ambient manifold. As an intrinsic result, we obtain a sufficient condition on the volume growth of a Cartan-Hadamard manifold to achieve the lowest bound for the fundamental tone given by McKean.   
\keywords{Fundamental tone \and  minimal submanifolds \and hyperbolic space \and Cartan-Hadamard manifold}
 \subjclass{53C40 \and  53C42}
\end{abstract}

\maketitle

\section{Introduction}\label{intro}
Let  $(M,g)$ be a Riemannian manifold, the \emph{fundamental tone}
$\lambda^{\ast}(M)$ of  $M$ is
defined by 

\begin{equation}\label{Ray} 
\lambda^{\ast}(M)=\inf\{\frac{\smallint_{M}\vert
\grad f \vert^{2}}{\smallint_{M} f^{2}};\, f\in
 {L^{2}_{1,0}(M )\setminus\{0\}}\}
\end{equation}
\noindent where $L^{2}_{1,\,0}(M )$ is the completion  of
$C^{\infty}_{0}(M )$ with respect to the norm $ \Vert\varphi
\Vert^2=\int_{M}\varphi^{2}+\int_{M} \vert\nabla \varphi\vert^2$.

The fundamental tone is a powerful tool in the challenging area of geometric analysis, being  a classic feature its relation with the curvature of the manifold. We refer to the
book of I. Chavel \cite{Chavel} for a discussion of these and
related concepts. 

For non-positively curved manifolds, lower bounds for the fundamental tone are well-known. For these cases, McKean gave the following bounds for the fundamental tone of a Cartan-Hadamard manifold.
\begin{theorem}[See \cite{McKean}]\label{McKeanTh}Let $M^n$ be a complete, simply connected manifold with sectional curvature bounded above by $\kappa<0$. Then
\begin{equation}\label{McKean}
\lambda^*(M)\geq \frac{-(n-1)^2\kappa}{4}\quad .
\end{equation}
\end{theorem}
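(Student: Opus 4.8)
The plan is to combine the Hessian comparison theorem with an integration by parts carried out directly inside the Rayleigh quotient \eqref{Ray}. Fix a point $p\in M$ and set $r=\dist(p,\cdot)$. Since $M$ is complete, simply connected and has non-positive sectional curvature, the exponential map at $p$ is a diffeomorphism, so $r$ is smooth on $M\setminus\{p\}$ and there $\vert\grad r\vert\equiv 1$. The hypothesis that the sectional curvature is bounded above by $\kappa<0$, together with the Hessian comparison theorem, gives on $M\setminus\{p\}$, for every tangent vector $X$,
\[
\Hess r(X,X)\ \ge\ \sqrt{-\kappa}\,\coth\!\big(\sqrt{-\kappa}\,r\big)\,\big(\vert X\vert^{2}-\langle X,\grad r\rangle^{2}\big).
\]
Tracing this inequality over an orthonormal frame and using $\coth t\ge 1$ for all $t>0$ yields the basic estimate
\[
\Delta r\ \ge\ (n-1)\,\sqrt{-\kappa}\,\coth\!\big(\sqrt{-\kappa}\,r\big)\ \ge\ (n-1)\,\sqrt{-\kappa}\qquad\text{on }M\setminus\{p\}.
\]

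Next I would take an arbitrary $f\in C^{\infty}_{0}(M)$, which we may assume is not identically zero, bound $\int_{M}f^{2}\,\Delta r$ from below by the estimate just obtained and from above via the divergence theorem:
\[
(n-1)\sqrt{-\kappa}\int_{M}f^{2}\ \le\ \int_{M}f^{2}\,\Delta r\ =\ -2\int_{M}f\,\langle\grad f,\grad r\rangle\ \le\ 2\int_{M}\vert f\vert\,\vert\grad f\vert .
\]
Applying the Cauchy--Schwarz inequality to the last integral and dividing by $\big(\int_{M}f^{2}\big)^{1/2}$ gives
\[
(n-1)\sqrt{-\kappa}\,\Big(\int_{M}f^{2}\Big)^{1/2}\ \le\ 2\,\Big(\int_{M}\vert\grad f\vert^{2}\Big)^{1/2},
\]
and squaring yields $\frac{-(n-1)^{2}\kappa}{4}\int_{M}f^{2}\le \int_{M}\vert\grad f\vert^{2}$. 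Taking the infimum over $f\in C^{\infty}_{0}(M)\setminus\{0\}$ and passing to the completion $L^{2}_{1,0}(M)$ proves \eqref{McKean}.

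The one delicate point, and the main (mild) obstacle, is that $r$ fails to be smooth at the base point $p$, so the divergence theorem cannot be applied verbatim on all of $M$. I would handle this by excising a small geodesic ball $B_{\varepsilon}(p)$ and applying the divergence theorem on $M\setminus B_{\varepsilon}(p)$: the extra boundary contribution over $\partial B_{\varepsilon}(p)$ is bounded in absolute value by $(\max_{M}f^{2})\,\Vol(\partial B_{\varepsilon}(p))$, which tends to $0$ as $\varepsilon\to 0$, while the interior integrals converge to the corresponding integrals over $M$ because $\Delta r=O(1/r)$ near $p$ (indeed $r\,\Delta r\to n-1$ as $r\to 0$) and $r^{-1}$ is locally integrable when $n\ge 2$ (for $n=1$ the inequality \eqref{McKean} is vacuous). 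Equivalently, one notes that $\Delta r\ge (n-1)\sqrt{-\kappa}$ holds on all of $M$ in the barrier, hence distributional, sense, which legitimizes the integration by parts directly.
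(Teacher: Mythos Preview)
Your argument is correct and is essentially McKean's original proof: Hessian/Laplacian comparison on a Cartan--Hadamard manifold gives $\Delta r\ge (n-1)\sqrt{-\kappa}$ away from the pole, and the integration by parts against $f^{2}$ plus Cauchy--Schwarz yields the bound. Your treatment of the singularity at $p$ (excising $B_\varepsilon(p)$, or equivalently invoking the distributional inequality) is the standard way to make this rigorous.

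As for comparison with the paper: the paper does \emph{not} supply its own proof of Theorem~\ref{McKeanTh}; it is quoted as background with a reference to \cite{McKean}. The only place the paper argues a lower bound of this type is in \S\ref{main-prop}, for minimal submanifolds of a Cartan--Hadamard ambient, and there it proceeds differently: it invokes the Cheeger constant estimate $h(M)\ge (n-1)\sqrt{-\kappa}$ from \cite{GPCheeger} and then Cheeger's inequality $\lambda^{*}(M)\ge h(M)^{2}/4$. That route packages the Laplacian comparison inside an isoperimetric statement rather than applying it directly in the Rayleigh quotient as you do; both lead to the same constant, but your argument is self-contained while the paper's defers to the Cheeger machinery.
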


Similary, L.P. Cheung and L.F. Leung showed that minimal submanifolds of the hyperbolic space  retain this lower bound of the ambient manifold.
\begin{theorem}[See \cite{Cheung-Leung}]\label{Che-Leu}
Let $M^n$ be a complete  minimal submanifold  in the hyperbolic space $\Ham$ of constant sectional curvature $\kappa<0$. Then 
\begin{equation}\label{Cheung}
\lambda^*(M)\geq \frac{-\left(n-1\right)^2\kappa}{4}\quad .
\end{equation}
\end{theorem}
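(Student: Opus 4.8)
The plan is to run the classical divergence-theorem (Barta-type) argument. It suffices to produce a vector field $X$ on $M$ with $|X|\le 1$ pointwise and $\Div_M X\ge (n-1)\sqrt{-\kappa}$. Granting this, for every $f\in C_0^\infty(M)$ the divergence theorem together with the Cauchy--Schwarz inequality gives
\begin{equation*}
(n-1)\sqrt{-\kappa}\int_M f^2\le\int_M f^2\,\Div_M X=-2\int_M f\,\langle\nabla f,X\rangle\le 2\Bigl(\int_M f^2\Bigr)^{1/2}\Bigl(\int_M|\nabla f|^2\Bigr)^{1/2},
\end{equation*}
whence $\int_M|\nabla f|^2\big/\int_M f^2\ge\tfrac14(n-1)^2(-\kappa)$; since $C_0^\infty(M)$ is dense in $L^2_{1,0}(M)$, taking the infimum over $f$ yields \eqref{Cheung}.

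To construct $X$, fix a point $o\in\Ham$, let $r=\dist_{\Ham}(o,\cdot)$ denote the ambient distance function restricted to $M$, and set $X=\nabla r$, the intrinsic gradient on $M$ of this restricted function. Since $\nabla r$ is the orthogonal projection onto $TM$ of the ambient gradient $\nabla^{\Ham}r$, a unit vector field, we have $|X|=|\nabla r|\le 1$. For the divergence I would invoke the general identity, valid for any isometric immersion and any smooth $\varphi\colon\mathbb{R}\to\mathbb{R}$,
\begin{equation*}
\Delta_M(\varphi\circ r)=\sum_{i=1}^n\Hess^{\Ham}(\varphi\circ r)(e_i,e_i)+\langle\nabla^{\Ham}(\varphi\circ r),\vec H\rangle,
\end{equation*}
where $\{e_i\}_{i=1}^{n}$ is a local orthonormal frame of $TM$ and $\vec H$ is the mean curvature vector of $M$ in $\Ham$, which vanishes by minimality. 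Taking $\varphi=\mathrm{id}$ and inserting the well-known Hessian of the distance function in the space form, $\Hess^{\Ham}r=\sqrt{-\kappa}\coth(\sqrt{-\kappa}\,r)\,(g-dr\otimes dr)$, one obtains
\begin{equation*}
\Div_M X=\Delta_M r=\sqrt{-\kappa}\coth(\sqrt{-\kappa}\,r)\bigl(n-|\nabla r|^2\bigr)\ge (n-1)\sqrt{-\kappa},
\end{equation*}
using $|\nabla r|\le 1$ and $\coth\ge 1$ on $(0,\infty)$. This is exactly the estimate required in the first step.

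I expect the one genuine subtlety to be regularity at $o$: if $o\in M$, then $X$ is only Lipschitz near $o$ and $\Delta_M r$ is singular there, so the divergence theorem above needs a word of justification. For $n\ge 2$ this is harmless: $\Ham$ has empty cut locus, so $r$ is smooth on $M\setminus\{o\}$, and near $o$ the immersion is Euclidean to first order, so $\Delta_M r\sim (n-1)/r>0$ is locally integrable; excising a small geodesic ball $B_\varepsilon(o)$ and letting $\varepsilon\to 0$ (the boundary flux is $O(\varepsilon^{\,n-1})$) contributes nothing extra, and alternatively one may replace $X$ by $\nabla\sqrt{r^2+\varepsilon^2}$ and pass to the limit, or simply choose $o\notin M$. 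The case $n=1$ is trivial, since then the asserted bound is $\lambda^*(M)\ge 0$. Thus the crux is the Hessian-comparison identity combined with minimality, the rest being routine bookkeeping.
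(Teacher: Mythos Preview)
Your argument is correct and is essentially the original Cheung--Leung proof: choose $X=\nabla r$ for the extrinsic distance $r$, use the Hessian of $r$ in the space form together with minimality to get $\Div_M X=\Delta_M r\ge (n-1)\sqrt{-\kappa}$, and feed this into the Barta/divergence inequality. Your handling of the singularity at $o$ is adequate; the simplest way out is, as you note, to take $o\notin M$ (or to take $o\in\Ham\setminus\varphi(M)$ when the immersion is not an embedding), which avoids the issue entirely.

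The paper does not spell out a proof of this theorem: it quotes it from \cite{Cheung-Leung} and, in \S\ref{main-prop}, only remarks that the lower bound also follows from the Cheeger route, namely $h(M)\ge (n-1)\sqrt{-\kappa}$ for minimal submanifolds in Cartan--Hadamard targets (from \cite{GPCheeger}) combined with Cheeger's inequality $\lambda^*(M)\ge h(M)^2/4$. That alternative is slightly more general in the ambient space (any Cartan--Hadamard $N$ with $K_N\le\kappa$), but it hides exactly the same Laplacian comparison $\Delta_M r\ge (n-1)\sqrt{-\kappa}$ inside the proof of the Cheeger-constant bound; your direct argument makes this step explicit and is self-contained.
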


Observe that inequality (\ref{McKean}) and inequality (\ref{Cheung}) are sharp in the sense that when $M$ is the hyperbolic space $\Han$ (considered as a manifold or considered as a totally geodesic submanifold of $\Ham$) the fundamental tone $\lambda^*(M)$ satisfies   $\lambda^*(M)=\frac{-(n-1)^2\kappa}{4}$ (see \cite{Chavel} or \cite{McKean}). 

Moreover, it is known that  other minimal submanifolds exist as well as the hyperbolic space with that lowest fundamental tone. For example, all the classical minimal catenoids in the hyperbolic space $\mathbb{H}^3(-1)$  given by M. do Carmo and M. Dajczer in \cite{DoDa} achieve equality in (\ref{Cheung}) as shown by  A. Candel.
\begin{theorem}[See \cite{Candel}]\label{candel2}
The fundamental tone of the minimal catenoids (given in \cite{DoDa}) in the hyperbolic space $\mathbb{H}^3(-1)$ is 
$$
\lambda^*(M)=\frac{1}{4}\quad .
$$  
\end{theorem}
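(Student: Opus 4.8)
The plan is the following. Since we are in dimension $n=2$ with ambient curvature $\kappa=-1$, the quantity $-(n-1)^2\kappa/4$ equals $\tfrac14$, so the lower bound $\lambda^*(M)\ge\tfrac14$ is an immediate consequence of Theorem~\ref{Che-Leu} of Cheung and Leung. Thus the whole content of the statement is the matching upper bound $\lambda^*(M)\le\tfrac14$, and I would obtain it by exploiting the rotational symmetry of the do Carmo--Dajczer catenoids to reduce the estimate of the Rayleigh quotient to a one-dimensional problem along the profile curve.

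First I would write the induced metric of the catenoid, being a surface of revolution in $\Hathree$ around a geodesic, in the warped form $g=dt^2+\phi(t)^2\,d\theta^2$ with $t\in\mathbb{R}$, $\theta\in[0,2\pi)$, where the profile function $\phi$ satisfies the first-order ODE coming from the do Carmo--Dajczer classification of rotationally invariant minimal hypersurfaces. The decisive point is the behaviour of $\phi$ at the two ends: the second fundamental form of the catenoid tends to zero along each end (so that, by the Gauss equation, the intrinsic curvature tends to $-1$), the ends are asymptotic to totally geodesic copies of $\Hatwo$, and one gets $\phi(t)\sim c_\pm e^{\pm t}$ and, more precisely, $\phi'(t)/\phi(t)\to\pm1$ as $t\to\pm\infty$. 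This asymptotic analysis of the ODE for $\phi$ is the step I expect to be the main obstacle, since the subsequent argument needs enough control on $\phi'/\phi$ to guarantee that the perturbation from the model warping $e^{|t|}$ does not affect the limit of the Rayleigh quotients.

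For a rotationally invariant function $f=f(t)$ the Rayleigh quotient of \eqref{Ray} becomes
\begin{equation*}
\frac{\int_M|\grad f|^2}{\int_M f^2}\;=\;\frac{\int_{\mathbb{R}}f'(t)^2\,\phi(t)\,dt}{\int_{\mathbb{R}}f(t)^2\,\phi(t)\,dt},
\end{equation*}
and the substitution $u=\sqrt{\phi}\,f$ transforms it into $\bigl(\int_{\mathbb{R}}(u')^2+V\,u^2\bigr)\big/\int_{\mathbb{R}}u^2$ with $V=\tfrac12\,\phi''/\phi-\tfrac14(\phi'/\phi)^2$; at each end $V\to\tfrac14$. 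Fixing $\varepsilon>0$ and choosing, for $R$ large, $u$ to be a smooth bump supported on an interval $[R,R+L]$ on which $V\le\tfrac14+\varepsilon$, the quotient is at most $\tfrac14+\varepsilon+\int(u')^2/\int u^2$, and letting the width $L\to\infty$ makes the last term tend to $0$; hence $\lambda^*(M)\le\tfrac14+\varepsilon$ for every $\varepsilon>0$, that is $\lambda^*(M)\le\tfrac14$. Alternatively one can invoke the Brooks-type inequality $\lambda^*(M)\le\tfrac14\bigl(\limsup_{r\to\infty}\tfrac1r\log\Vol(B_r)\bigr)^2$ for complete noncompact manifolds of infinite volume, together with the computation $\Vol(B_r)\sim C\,e^{r}$ coming from $\phi(t)\sim c_\pm e^{\pm t}$, which gives volume entropy $1$ and the same bound. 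Combining either route with Theorem~\ref{Che-Leu} yields $\lambda^*(M)=\tfrac14$.
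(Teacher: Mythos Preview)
Your argument is sound: the lower bound via Cheung--Leung is shared with the paper, and your upper bound via the warped-product reduction $g=dt^2+\phi(t)^2\,d\theta^2$, the substitution $u=\sqrt{\phi}\,f$, and test functions supported far out where the effective potential $V\to\tfrac14$ is a correct and classical route. The step you flag --- sufficient control of $\phi''/\phi$ and $\phi'/\phi$ from the do~Carmo--Dajczer ODE --- is indeed the only place requiring care, but it goes through. Your Brooks-entropy alternative is also valid.

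However, your approach is essentially Candel's own, and is \emph{not} the paper's. The paper does not reprove this cited theorem directly; rather, it recovers it as a special case of Theorem~A via the Main Theorem. The mechanism is entirely extrinsic: Seo's observation that the spherical catenoids have $\int_M|A|^2\,d\mu<\infty$, together with the Chern--Osserman-type inequality, forces the volume-growth quotient $\Q(R)=\Vol(M_p^R)/\Vol(B_R^\kappa)$ to be bounded; the Main Theorem then produces the upper bound $\lambda^*(M)\le\tfrac14$ by transplanting a radial test function of the form $\sin\bigl(2\pi(s-R/2)/R\bigr)\,\Vol(S_s^\kappa)^{-1/2}$ along the \emph{extrinsic} distance and comparing the resulting Rayleigh quotient against the model via the monotonicity of $\Q$. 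Your method exploits the rotational symmetry and is tailored to the catenoids; the paper's method ignores the symmetry entirely and applies uniformly to any minimal surface in $\mathbb{H}^m(\kappa)$ with finite total extrinsic curvature --- at the cost of importing the volume-growth machinery and the associated Chern--Osserman bound.
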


The purpose of this paper is to provide geometric conditions under which a submanifold (or a manifold) attains the equality in the inequality  (\ref{Cheung}) (or inequality (\ref{McKean}), respectively). Let us emphasize that the spherical catenoids given in the previous example, as it was analyzed  in  \cite{Seo},  have finite  $L^2$-norm of the second fundamental form (denoted by $A$ throughout this paper)
\begin{equation}
\int_M\vert A \vert^2 d\mu<\infty\quad.
\end{equation}

This could suggest that the finiteness of the $L^2$-norm of the second fundamental form for minimal surfaces in the hyperbolic space implies the lowest fundamental tone. This is in fact true, and is the statement of our first theorem.
\begin{theoremA}
Let $M^2$ be a minimal surface immersed in the hyperbolic space $\Ham$ of constant sectional curvature $\kappa<0$. Suppose that $M^2$ has  finite total extrinsic curvature, i.e,  $\int_M\vert A \vert^2 d\mu<\infty$. Then, the fundamental tone satisfies
\begin{equation}
\lambda^*(M)= \frac{-\kappa}{4}\quad .
\end{equation}
\end{theoremA}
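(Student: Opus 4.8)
The strategy is to establish the two inequalities separately. The lower bound $\lambda^*(M) \geq -\kappa/4$ is free: it is exactly Theorem~\ref{Che-Leu} of Cheung--Leung applied with $n=2$. So the whole content is the reverse inequality $\lambda^*(M) \leq -\kappa/4$, and for this I will produce, for every $\varepsilon>0$, a compactly supported test function $f$ with Rayleigh quotient below $-\kappa/4 + \varepsilon$. The natural candidate is a radial function built from the extrinsic distance (or, better, from the intrinsic distance) that mimics the function realizing the fundamental tone of $\Hatwo$ itself, namely something comparable to $e^{-\sqrt{-\kappa}\,r/2}$, cut off at large radius.

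The key point that makes the cutoff work is a geometric consequence of finite total curvature. In dimension two, $\int_M |A|^2\,d\mu < \infty$ forces the minimal surface to be, outside a compact set, a finite union of ends each of which is (sub)exponentially close to a totally geodesic $\Hatwo \subset \Ham$; in particular each end is properly immersed, has finite topology, and — crucially — the intrinsic metric on an end is asymptotic to the metric of a totally geodesic copy of $\Hatwo$, whose volume form in geodesic polar coordinates grows like $\sinh(\sqrt{-\kappa}\,r)\,dr\,d\theta$. Equivalently, I expect to show that the volume of intrinsic balls satisfies $\operatorname{Vol}(B_r) \le C e^{(k+\epsilon)\sqrt{-\kappa}\,r}$ with the exponential rate $k$ controlled (here $k=1$, the model rate), which is precisely the hypothesis needed in an intrinsic test-function argument of the type appearing later in the paper (the ``Corollary E''/``Proposition G'' circle of ideas, and the McKean-type intrinsic result mentioned in the abstract). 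Once such a volume-growth bound is in hand, one plugs $f_R(x) = \phi(r(x))$ with $\phi(r) = e^{-\sqrt{-\kappa}\,r/2}$ for $r \le R$ and a linear (or smooth) decay to $0$ on $[R, 2R]$, computes $\int_M |\nabla f_R|^2$ and $\int_M f_R^2$ using the coarea formula against the volume growth bound, and checks that the error terms coming from the annulus $R \le r \le 2R$ are negligible as $R \to \infty$ because the exponential rate of $\phi^2$ beats the volume growth with room to spare. This yields $\lambda^*(M) \le -\kappa/4$.

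A technical wrinkle is that to run the Rayleigh-quotient estimate cleanly one wants $|\nabla r| = 1$ (true a.e.\ for the intrinsic distance on a complete manifold) and one wants to avoid the cut locus; this is handled in the standard way by working with Lipschitz test functions and the coarea formula, so no real obstacle arises there. The genuine obstacle — the step I expect to require the most care — is the passage from the analytic hypothesis $\int_M |A|^2 < \infty$ to the geometric statement that the ends are asymptotically totally geodesic with the sharp exponential volume growth rate $e^{\sqrt{-\kappa}\,r}$ (and not merely some larger rate). This is where one must invoke the structure theory of minimal surfaces of finite total curvature in hyperbolic space: a monotonicity/decay argument for $|A|$ along each end, a comparison of the induced metric with the flat/hyperbolic model via the Gauss equation (which relates the intrinsic curvature $K_M = \kappa + \text{(a term quadratic in } A)$ to $\kappa$ plus an $L^1$ error), and then a Huber-type or Cohn-Vossen-type conclusion that each end is conformally a punctured disk with controlled metric. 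Granting that structural input, the volume-growth bound and hence the upper bound on $\lambda^*(M)$ follow by the test-function computation sketched above, completing the equality.
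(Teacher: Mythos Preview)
Your outline is plausible but departs from the paper's argument in two ways, and the second of these is a genuine gap as written.

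\textbf{Different route.} The paper never touches the intrinsic distance or any structure theory of the ends. It works with the \emph{extrinsic} distance $r_p$ and the volume growth quotient $\mathcal{Q}(R)=\operatorname{Vol}(M_p^R)/\operatorname{Vol}(B_R^\kappa)$. The only structural input is the Chern--Osserman type inequality (cited from \cite{ChOss}, \cite{GP}, \cite{Che3}) that $\int_M |A|^2\,d\mu<\infty$ forces $\sup_R \mathcal{Q}(R)<\infty$; this is strictly weaker and more directly available than your claim that each end is asymptotically a totally geodesic $\mathbb{H}^2$. With $\mathcal{Q}$ bounded, the paper applies its Main Theorem: transplant the radial profile $\phi(t)=\sin\bigl(2\pi(t-R/2)/R\bigr)\big/\operatorname{Vol}(S_t^\kappa)^{1/2}$ supported on the extrinsic annulus $R/2\le r_p\le R$, use $|\nabla^M r_p|\le 1$ to control the numerator, compare both integrals with the model via coarea and the monotonicity of $\mathcal{Q}$, and send $R\to\infty$. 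Your worry about cut loci evaporates with the extrinsic distance, since the inequality $|\nabla^M r_p|\le 1$ points the right way for an \emph{upper} bound on the Rayleigh quotient.

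\textbf{The gap.} Your test function $f_R=e^{-\sqrt{-\kappa}\,r/2}$ with a linear cutoff on $[R,2R]$ does \emph{not} work as stated. At the critical exponent the decay of $\phi^2\sim e^{-\sqrt{-\kappa}\,r}$ exactly matches the model area growth $e^{\sqrt{-\kappa}\,r}$, so there is no ``room to spare'': the denominator $\int_{B_R}\phi^2$ grows only like $R$, while the crude bound on the cutoff annulus gives
\[
\frac{1}{R^2}\int_{R\le r\le 2R} e^{-\sqrt{-\kappa}\,r}\,d\mu \;\lesssim\; \frac{1}{R^2}\,e^{-\sqrt{-\kappa}R}\operatorname{Vol}(B_{2R}) \;\sim\; \frac{e^{\sqrt{-\kappa}R}}{R^2},
\]
which blows up. The Brooks-type conclusion you want is correct, but it requires either a more careful cutoff/integration-by-parts argument or, as the paper does, test functions supported on annuli moving to infinity so that the monotone quotient $\mathcal{Q}(R)/\mathcal{Q}(R/2)\to 1$ does the work. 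Either fix is routine once you see the issue, but the sentence ``the exponential rate of $\phi^2$ beats the volume growth with room to spare'' is false and should be replaced by one of these mechanisms.
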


 Our approach to the problem makes use of our previous results about
 the influence of the extrinsic curvature restrictions on the volume
 growth of the submanifold (see
 \cite{GP},\cite{GPCheeger},\cite{GPGap} or \S \ref{prelim}), and also the relation between the finite volume growth and the fundamental tone as a new tool (see the Main Theorem).

For higher dimensional minimal submanifolds of the hyperbolic space, under an appropriate decay of the norm of the second fundamental form  on the extrinsic distance  (see \S \ref{prelim} for a precise definition) we obtain 

\begin{theoremA}
Let $M^n$ be a minimal  submanifold properly immersed in the hyperbolic space $\Ham$ of constant sectional curvature $\kappa<0$. Suppose moreover that   $n>2$ and  the submanifold is of faster than exponential decay of its extrinsic curvature. Namely,  there exists a point $p\in M$ such that  
$$
\vert A\vert_x \leq \frac{\delta(r_p(x))}{e^{2\sqrt{-\kappa}r_p(x)}} \quad ,
$$ 
where $\delta(r)$ is a function such that $\delta(r)\to 0$ when $r\to \infty$ and $r_p$ is the extrinsic distance function.
Then, the fundamental tone satisfies

\begin{equation}
\lambda^*(M)= \frac{-(n-1)^2\kappa}{4}\quad .
\end{equation}   
\end{theoremA}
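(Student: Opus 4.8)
The equality combines two inequalities. The lower bound $\lambda^*(M)\ge \frac{-(n-1)^2\kappa}{4}$ is immediate, since a properly immersed submanifold of the complete manifold $\Ham$ is itself complete, so Theorem~\ref{Che-Leu} of Cheung and Leung applies without change. Hence the only real content is the reverse inequality $\lambda^*(M)\le \frac{-(n-1)^2\kappa}{4}$, which I would obtain from the Main Theorem after converting the extrinsic curvature decay into a bound on the volume growth of $M$.

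First I would pass from intrinsic to extrinsic balls. Let $r_p$ be the extrinsic distance from $p$ and $D_R=\{x\in M : r_p(x)<R\}$; by properness every $D_R$ is relatively compact, and since an isometric immersion does not increase the length of curves we have $r_p(x)\le \dist_M(p,x)$, so the intrinsic geodesic ball $B_R^M$ is contained in $D_R$ and $\Vol(B_R^M)\le\Vol(D_R)$. It therefore suffices to show that $\Vol(D_R)/\Vol(B_R^{\kappa,n})$ stays bounded as $R\to\infty$ (equivalently, that $\limsup_{R\to\infty}\tfrac1R\log\Vol(D_R)\le(n-1)\sqrt{-\kappa}$); inserting this into the Main Theorem yields $\lambda^*(M)\le\big((n-1)\sqrt{-\kappa}\big)^2/4=\frac{-(n-1)^2\kappa}{4}$, which with the lower bound proves the statement. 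Here one also needs $M$ to be noncompact and of infinite volume: the Cartan--Hadamard ambient carries no compact minimal submanifold, and the lower Hessian comparison for minimal submanifolds gives $\Vol(D_R)\ge\Vol(B_R^{\kappa,n})\to\infty$, so these hypotheses of the Main Theorem are met.

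The substance of the argument is the upper volume estimate. Because $M$ is minimal, $\Delta^M r_p$ is the trace over $TM$ of the ambient Hessian of $r_p$, so the Hessian comparison in $\Ham$ gives
\[
\Delta^M r_p=\sqrt{-\kappa}\,\coth\!\big(\sqrt{-\kappa}\,r_p\big)\,\big(n-\|\nabla^M r_p\|^2\big),\qquad 0\le \|\nabla^M r_p\|^2=1-\|(\bar\nabla r_p)^{\perp}\|^2\le 1 .
\]
The crude bound $\|\nabla^M r_p\|^2\ge 0$ only yields the exponential rate $n\sqrt{-\kappa}$, which is too large; the point is that the tangential deviation $1-\|\nabla^M r_p\|^2=\|(\bar\nabla r_p)^{\perp}\|^2$ --- which measures how far $T_xM$ is from containing the radial direction --- is controlled by the second fundamental form through a first-order differential inequality along $M$ whose inhomogeneity is a multiple of $|A|$. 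The assumption $|A|_x\le \delta(r_p(x))\,e^{-2\sqrt{-\kappa}\,r_p(x)}$ with $\delta\to 0$ is exactly what makes the relevant integral converge, forcing $\|(\bar\nabla r_p)^{\perp}\|^2\to 0$ fast enough that, substituting back and integrating by the divergence theorem and the coarea formula on $M$, one obtains $\Vol(D_R)\le C\,\Vol(B_R^{\kappa,n})$. This relative volume comparison under faster than exponential decay of the extrinsic curvature is precisely the sort of statement developed in \cite{GP}, \cite{GPCheeger}, \cite{GPGap}, and in practice I would quote it from there; the hypothesis $n>2$ enters here because for $n=2$ the same conclusion follows already from $\int_M|A|^2\,d\mu<\infty$ (Theorem~A), whereas for $n>2$ the pointwise exponential decay is what is needed.

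The main obstacle is exactly this passage from pointwise curvature decay to the sharp volume growth rate $(n-1)\sqrt{-\kappa}$ rather than the trivial $n\sqrt{-\kappa}$: that is where the interplay between the second fundamental form and the geometry of the extrinsic distance function must be used, and where the previously developed comparison machinery does the work. Once the volume bound is in hand, the reduction via properness and the deduction from the Main Theorem and Theorem~\ref{Che-Leu} are formal.
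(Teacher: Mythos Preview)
Your outline is essentially the paper's proof: the lower bound is Theorem~\ref{Che-Leu}, and the upper bound follows from the Main Theorem once one knows $\sup_{R>0}\Q(R)<\infty$, which is exactly the result quoted from \cite{GPGap} in \S\ref{finite-growth}. Two minor points of cleanup: the detour through intrinsic geodesic balls is unnecessary, since the volume growth function $\Q(R)$ in the Main Theorem is already defined with the extrinsic balls $M_p^R=D_R$, so there is nothing to pass between; and your parenthetical ``equivalently, $\limsup_{R\to\infty}\tfrac1R\log\Vol(D_R)\le(n-1)\sqrt{-\kappa}$'' is not actually equivalent to the boundedness of $\Q$---it is strictly weaker---though this does no harm here because the cited theorem from \cite{GPGap} delivers the stronger conclusion $\Q(R)\le\mathcal{E}(M)$.
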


 We also prove for  minimal submanifolds of the Euclidean space the following well known result (see for example \cite{Pac2})

\begin{theoremA} 
Let $M^n$ be a minimal submanifold immersed in the Euclidean space $\erre^m$ with finite total scalar curvature, i.e,  $\int_M\vert A\vert^n d\mu<\infty$. Then
\begin{equation}
\lambda^*(M)=0\quad .
\end{equation}   
\end{theoremA}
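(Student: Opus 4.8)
The plan is to show that a minimal submanifold $M^n$ of Euclidean space with $\int_M |A|^n\, d\mu < \infty$ has Euclidean-type volume growth, and then to feed this volume information into a Cheeger-constant or test-function argument to conclude $\lambda^*(M) = 0$. The starting observation is that for such submanifolds the total scalar curvature hypothesis forces $M$ to have finite topology and, more to the point here, the volume ratio $\Vol(B_R^M(p))/R^n$ is bounded above (and below, by monotonicity, since $M$ is minimal) — this is precisely the kind of extrinsic-curvature-controlled volume-growth statement alluded to in the introduction and available from \S \ref{prelim} and the references \cite{GP},\cite{GPGap}. In particular $M$ has at most Euclidean volume growth: there is a constant $C$ with $\Vol(B_R^M(p)) \le C R^n$ for all $R$ large.

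First I would recall the general principle (going back to Cheeger, and used in the Main Theorem of this paper) that a lower bound on the fundamental tone forces at least exponential volume growth: if $\lambda^*(M) = \lambda_0 > 0$ then for every relatively compact domain the Cheeger inequality gives an isoperimetric constant $h(M) \ge 2\sqrt{\lambda_0} > 0$, and an isoperimetric inequality $\Vol(\partial \Omega) \ge h(M)\Vol(\Omega)$ integrated along geodesic spheres yields $\Vol(B_R^M(p)) \ge c\, e^{h(M) R}$ for large $R$. Comparing this exponential lower bound with the polynomial upper bound $\Vol(B_R^M(p)) \le C R^n$ coming from the finite total scalar curvature gives a contradiction for $R$ large. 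Hence $\lambda^*(M) = 0$.

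Alternatively, and perhaps more cleanly, I would construct explicit test functions: given the polynomial volume growth $\Vol(B_R) \le C R^n$, take the standard logarithmic cutoffs $f_R$ equal to $1$ on $B_{\sqrt R}(p)$, equal to $0$ outside $B_R(p)$, and interpolating logarithmically in between (or, even simpler, radial tent functions $f_R = (1 - r_p/R)_+$). For tent functions one computes $\int_M |\nabla f_R|^2 \le R^{-2}\Vol(B_R) \le C R^{n-2}$ while $\int_M f_R^2 \ge \frac14 \Vol(B_{R/2}) \ge c R^n$ by the lower volume bound (monotonicity for minimal submanifolds), so the Rayleigh quotient is $O(R^{-2}) \to 0$; when $n = 2$ one passes to logarithmic cutoffs to kill the boundary term. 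Either way the Rayleigh quotient tends to $0$, so $\lambda^*(M) \le 0$, and since $\lambda^*(M) \ge 0$ always, equality follows.

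The main obstacle is the volume-growth input: one must be sure that $\int_M |A|^n\, d\mu < \infty$ together with minimality really does yield a \emph{two-sided} Euclidean volume bound $c R^n \le \Vol(B_R^M(p)) \le C R^n$ (the lower bound is the monotonicity formula for minimal submanifolds; the upper bound is the substantive consequence of the curvature integral, and is where the cited results \cite{GP},\cite{GPGap} do the work — it relies on the fact that finite $L^n$-norm of $A$ controls the number of ends and the asymptotic behaviour of each end, making $M$ look Euclidean at infinity). Once that is in hand, the passage to $\lambda^* = 0$ is the routine test-function computation sketched above, and the only mild care needed is the borderline dimension $n = 2$, where tent functions fail to give a vanishing Rayleigh quotient and one must use the logarithmic cutoff instead.
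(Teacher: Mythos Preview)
Your second route (explicit radial test functions fed by the two-sided Euclidean volume bound) is correct and is in the same spirit as the paper, though the paper does not argue the Euclidean case separately: it applies its Main Theorem, whose single test function $\phi(s)=\sin\!\bigl(2\pi(s-R/2)/R\bigr)\,\Vol(S_s^\kappa)^{-1/2}$ transplanted along the extrinsic distance handles all $\kappa\le 0$ at once, and for $\kappa=0$ the limit $\Lambda(R)\to 0$ gives $\lambda^*(M)=0$. Your tent-function computation is more elementary and specific to $\kappa=0$; the paper's is more uniform. Note also that your worry about $n=2$ is unfounded here: with the two-sided bound $cR^n\le \Vol(M_p^R)\le CR^n$ the tent function already gives Rayleigh quotient $O(R^{-2})\to 0$ for every $n\ge 1$; the logarithmic cutoff is only needed when one seeks $\int|\nabla f_R|^2\to 0$ in absolute terms, not for the ratio.

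Your first route, however, contains a genuine error: Cheeger's inequality reads $\lambda^*(M)\ge h(M)^2/4$, so from $\lambda^*(M)=\lambda_0>0$ you get only $h(M)\le 2\sqrt{\lambda_0}$, not $h(M)\ge 2\sqrt{\lambda_0}$. You cannot extract a positive lower bound on the isoperimetric constant from a positive lower bound on $\lambda^*$ via Cheeger. The implication ``$\lambda^*>0\Rightarrow$ exponential volume growth'' is true, but it goes through Brooks' inequality $\lambda^*(M)\le \mu^2/4$ (with $\mu$ the exponential volume-growth rate), which is itself proved by a test-function argument --- so this route ultimately collapses into your second one.
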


\begin{remark}
As far as we know, the question 1.5 given in \cite{Pac2} about the existence (or not) of complete minimal submanifolds properly immersed in the $\erre^m$ with positive fundamental tone is still an open question. In any case, if such kind of immersion exists, from our Main Theorem the immersion should be of infinite volume growth, and from Theorem \ref{two-sides} the submanifold has no an extrinsic doubling property.   
\end{remark}

Finally, as an intrinsic version of our Main Theorem one may  obtain the following theorem in the direction of the McKean's Theorem \ref{McKeanTh}

\begin{theoremA}
Let $M^n$ be a complete, simply connected manifold with sectional curvature bounded from above by $\kappa<0$. Suppose moreover that there exists a point $p\in M$ such that
\begin{equation}\label{intrinsic-condition}
\sup_{R>0}\frac{\Vol(B_R^M(p))}{\Br}<\infty\quad,
\end{equation}
where $B_R^M(p)$ is the geodesic ball in $M$  centered at $p$ of radius $R$, and $B_R^{\kappa}$ is the geodesic ball in $\Han$ of the same radius $R$ .Then
\begin{equation}
\lambda^*(M)= \frac{-(n-1)^2\kappa}{4}\quad .
\end{equation}
\end{theoremA}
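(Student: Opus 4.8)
The lower bound $\lambda^*(M)\ge \frac{-(n-1)^2\kappa}{4}$ is precisely McKean's Theorem \ref{McKeanTh}, so the whole content of the statement is the reverse inequality $\lambda^*(M)\le \frac{-(n-1)^2\kappa}{4}$, which I would establish by running the argument behind the Main Theorem with the \emph{intrinsic} distance in place of the extrinsic one. Put $a:=\frac{(n-1)\sqrt{-\kappa}}{2}$, so that $a^2=\frac{-(n-1)^2\kappa}{4}$, and let $r_p=\dist_M(p,\cdot)$. Since $M$ is a Cartan--Hadamard manifold, $r_p$ is smooth on $M\setminus\{p\}$ with $|\grad r_p|=1$, the closed balls $\overline{B_R^M(p)}$ are compact, and for any radial Lipschitz function $f=g(r_p)$ the coarea formula gives $\int_M|\grad f|^2=\int_0^\infty g'(r)^2\,\Vol(S_r^M(p))\,dr$ and $\int_M f^2=\int_0^\infty g(r)^2\,\Vol(S_r^M(p))\,dr$, where $S_r^M(p)$ denotes the geodesic sphere.

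The plan is to feed the Rayleigh quotient (\ref{Ray}) the compactly supported Lipschitz functions $f_R(x):=(e^{-a\,r_p(x)}-e^{-aR})_+$, supported in $B_R^M(p)$ (which belong to $L^2_{1,0}(M)$ by approximation). Since $|\grad f_R|^2=a^2e^{-2a\,r_p}$ on $B_R^M(p)$, the numerator is \emph{exactly} $\int_M|\grad f_R|^2=a^2D_R$ with $D_R:=\int_0^R e^{-2ar}\,\Vol(S_r^M(p))\,dr$, while expanding the square shows the denominator equals $D_R$ plus two correction terms controlled by $e^{-aR}\int_0^R e^{-ar}\Vol(S_r^M(p))\,dr$ and by $e^{-2aR}\Vol(B_R^M(p))$. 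It therefore suffices to show that $D_R\to\infty$ as $R\to\infty$ while the two corrections stay bounded in $R$; then the Rayleigh quotient of $f_R$ equals $a^2/(1+O(1/R))\to a^2$, so $\lambda^*(M)\le a^2$ and, together with McKean, equality holds.

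Both of these facts come from volume comparison, used in opposite directions. For the divergence of $D_R$ I would invoke G\"unther's volume comparison theorem --- globally valid here, as a Cartan--Hadamard manifold has empty cut locus --- which with $\sec_M\le\kappa$ gives $\Vol(S_r^M(p))\ge \Vol(S_r^\kappa)\ge c_0\,e^{2ar}$ for all large $r$ (recall $\Vol(S_r^\kappa)$ grows like $e^{(n-1)\sqrt{-\kappa}\,r}=e^{2ar}$); hence $D_R\ge c_0(R-r_0)\to\infty$. For the corrections, hypothesis (\ref{intrinsic-condition}) provides a constant $C$ with $\Vol(B_\rho^M(p))\le C\,\Vol(B_\rho^\kappa)\le c_1\,e^{2a\rho}$ for every $\rho>0$; feeding this into $\int_0^R e^{-ar}\Vol(S_r^M(p))\,dr$ after one integration by parts shows each correction term is $O(1)$ in $R$. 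Assembling the two estimates completes the proof.

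I expect the only genuinely delicate point to be exactly this \emph{two-sided} use of volume comparison: the curvature hypothesis $\sec_M\le\kappa$ is what makes the geodesic spheres large enough for the main part $D_R$ of the denominator to diverge, while the growth hypothesis (\ref{intrinsic-condition}) is what keeps the cut-off error negligible; if either hypothesis is dropped, the test functions $f_R$ no longer do the job. The remaining ingredients --- smoothness of $r_p$ off $p$, compactness of the geodesic balls, the coarea identities, and membership of compactly supported Lipschitz functions in $L^2_{1,0}(M)$ --- are standard on a Cartan--Hadamard manifold.
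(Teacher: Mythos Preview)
Your argument is correct, but it is \emph{not} the argument behind the Main Theorem, despite what your opening sentence promises. The paper's proof (specialised intrinsically) uses the annular test function
\[
\phi(t)=\frac{\sin\!\big(\tfrac{2\pi(t-R/2)}{R}\big)}{\Vol(S_t^\kappa)^{1/2}}\quad\text{on }[R/2,R],
\]
together with the monotonicity of $\Q(r)=\Vol(B_r^M(p))/\Vol(B_r^\kappa)$ (the intrinsic analogue of Proposition~\ref{Palmer}, via G\"unther comparison); the finite-growth hypothesis enters only through $\Q(R)/\Q(R/2)\to 1$. Your approach instead takes the truncated ``ground-state'' profile $f_R=(e^{-a r_p}-e^{-aR})_+$ on the full ball, splits the Rayleigh denominator as $D_R$ plus bounded corrections, and uses G\"unther comparison directly on the sphere volumes to force $D_R\to\infty$.

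Both routes are valid, and each has a virtue. Yours is more transparent in the intrinsic setting: $|\grad r_p|\equiv 1$ makes the numerator exactly $a^2D_R$, and the two-sided volume input (G\"unther below, hypothesis~(\ref{intrinsic-condition}) above) is explicit. The paper's annular test function, on the other hand, is tailored to the extrinsic situation where one only knows $|\grad r_p|\le 1$; there the inequality $\int|\grad\Phi|^2\le\int(\phi')^2$ replaces your exact identity, and the monotonicity of $\Q$ substitutes for a direct sphere-volume lower bound. One small presentational point: your ``$O(1/R)$'' for the relative error is in fact justified, since the hypothesis gives $D_R=O(R)$ after an integration by parts, but $o(1)$ would already suffice for the conclusion.
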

 
\begin{remark}
In view of the intrinsic version of the results of \cite{Esteve-Palmer} for complete, simply connected, $2$-dimensional manifold $M^2$ with Gaussian curvature $K_M$ bounded from above by $K_M\leq \kappa<0$, the condition (\ref{intrinsic-condition}) can be achieved provided the following 
\begin{equation}
\int_M(\kappa-K_M)d\mu<\infty\quad .
\end{equation}
In particular (in view of \cite{Esteve-Palmer}), for every   $2$-dimensional Cartan-Hadamard manifold $M$ which is asymptotically locally $\kappa$-hyperbolic of order $2$ (see \cite{Shi},\cite{Esteve-Palmer}) and with sectional
curvatures bounded from above by  $K_M \leq \kappa < 0$, the fundamental tone satisfies
$$
\lambda^*(M)=\frac{-\kappa}{4}\quad.
$$
\end{remark}

 The proof of the three previous extrinsic theorems (Theorem A, Theorem B and Theorem C) uses the fact that under the hypothesis of the theorems the submanifold has finite volume growth ($\displaystyle \lim_{R\to \infty}\Q(R)<\infty$, see \S \ref{prelim}, and \S \ref{finite-growth} for precise definitions and see also \cite{Tkachev} for an alternative definition in terms of projective volume). Hence, the theorems will be proved using the following Main Theorem:


\begin{thmain}\label{MainTh}
Let $M^n$ be a $n-$dimensional minimal submanifold properly immersed in a simply connected Cartan-Hadamard manifold $N$ of sectional curvature $K_N$ bounded from above by  $K_N\leq \kappa\leq
0$. Suppose that
\begin{equation}
\lim_{R\to \infty}\Q(R)<\infty\quad.
\end{equation}
Then,
\begin{equation}\label{equality}
\lambda^*(M)=-\frac{(m-1)^2\K}{4}\quad .
\end{equation}
\end{thmain}

To obtain the Main Theorem we estimate upper bounds for the
fundamental tone.  According to S.T. Yau (\cite{YauReview}) it is  important to find upper bounds for the fundamental tone.
In the case of stable minimal submanifolds of the hyperbolic space, A. Candel
gave in 2007  the following upper bound for the fundamental tone
\begin{theorem} [See \cite{Candel}]\label{candel1}
Let $M$ be a complete simply connected stable minimal surface in the $3-$dimensional hyperbolic space $\mathbb{H}^3(-1)$. Then the fundamental tone of $M$ satisfies
\begin{equation}
\frac{1}{4}\leq \lambda^*(M)\leq \frac{3}{4}\quad .
\end{equation}
\end{theorem}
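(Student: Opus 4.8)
The plan is to prove the two inequalities separately; the lower bound is immediate and all the difficulty lies in the upper bound, where stability enters. For the lower bound, a complete minimal surface in $\Hathree$ is in particular a complete minimal submanifold of a hyperbolic space of curvature $\kappa=-1$, so the Cheung--Leung inequality (\ref{Cheung}) with $n=2$ gives at once $\lambda^*(M)\ge\tfrac{-(2-1)^2(-1)}{4}=\tfrac14$. For the upper bound my strategy is the chain \emph{stability $\Rightarrow$ controlled total curvature $\Rightarrow$ controlled volume growth $\Rightarrow$ upper bound on $\lambda^*$}, which fits the volume--growth philosophy of the present paper.

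First I would record the two pointwise relations available for a minimal surface in $\Hathree$. The Gauss equation, together with minimality (opposite principal curvatures), gives $K_M=-1-\tfrac12|A|^2$, so that $K_M\le-1$ and $|A|^2=-2K_M-2$. The second variation of area furnishes the stability inequality
\begin{equation*}
\int_M|A|^2\phi^2\,d\mu\le\int_M|\grad\phi|^2\,d\mu+2\int_M\phi^2\,d\mu\qquad(\phi\in C_0^\infty(M)),
\end{equation*}
the constant $2=-\overline{\operatorname{Ric}}(\nu,\nu)$ being the ambient Ricci curvature of $\Hathree$ in the normal direction $\nu$. Substituting the Gauss relation converts this into a bound on the total curvature,
\begin{equation*}
\int_M(-K_M)\phi^2\,d\mu\le\tfrac12\int_M|\grad\phi|^2\,d\mu+2\int_M\phi^2\,d\mu .
\end{equation*}

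Next I would extract a volume--growth estimate. Since $K_M\le-1<0$ and $M$ is complete and simply connected, $M$ is a Cartan--Hadamard surface: the intrinsic distance $\rho$ to a fixed point has no cut locus and each geodesic ball $B_t$ is smooth. With $L(t):=\operatorname{Length}(\partial B_t)$ and $A(t):=\Vol(B_t)$, Gauss--Bonnet on $B_t$ gives $L'(t)=2\pi-\int_{B_t}K_M\,d\mu$, whence $\int_{\{\rho=t\}}(-K_M)\,ds=L''(t)$. Inserting the (cut--off) test function $\phi=e^{-\alpha\rho}$ into the total--curvature inequality and integrating by parts twice, using $|\grad\rho|=1$, reduces every term to a multiple of $J(\alpha):=\int_Me^{-2\alpha\rho}\,d\mu$ and produces
\begin{equation*}
(7\alpha^2-4)\,J(\alpha)\le 4\pi .
\end{equation*}
This holds for every $\alpha$ with $2\alpha$ above the exponential area--growth rate $\mu:=\limsup_{t\to\infty}\tfrac1t\log A(t)$, for which $J(\alpha)<\infty$. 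As $2\alpha\downarrow\mu$ one has $J(\alpha)\to\infty$, so the inequality forces $7(\mu/2)^2-4\le0$, that is $\mu\le 4/\sqrt7<\sqrt3$.

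Finally I would feed the same family $\phi=e^{-\alpha\rho}$ into the Rayleigh quotient (\ref{Ray}): because $|\grad\phi|=\alpha$, its quotient equals $\alpha^2$ for every $\alpha$ with $2\alpha>\mu$, so $\lambda^*(M)\le(\mu/2)^2=\mu^2/4$. Combined with $\mu\le4/\sqrt7$ this yields $\lambda^*(M)\le 4/7\le\tfrac34$, which proves the stated bound (in fact with room to spare). The main obstacle is the middle step: turning the \emph{integrated} stability bound into a clean bound on the growth exponent $\mu$. The delicate points are the legitimacy of the non--compactly supported test function $e^{-\alpha\rho}$ (handled by cutting off and using $J(\alpha)<\infty$ for $2\alpha>\mu$), the vanishing of the Gauss--Bonnet boundary contributions ($L'(0)=2\pi$ and $e^{-2\alpha t}L'(t)\to0$) in the two integrations by parts, and the passage to the limit $2\alpha\downarrow\mu$, where one must know that $J(\alpha)$ genuinely blows up; the comparison lower bound $A(t)\ge2\pi(\cosh t-1)$ coming from $K_M\le-1$ secures $\mu\ge1$ and helps control this limit.
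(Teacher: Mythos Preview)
The paper does not supply a proof of this theorem; it is quoted from \cite{Candel} as background and motivation only. So there is no ``paper's own proof'' to compare against, and I will assess your argument on its own terms.

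The lower bound is correct and is exactly the application of Theorem~\ref{Che-Leu} with $n=2$, $\kappa=-1$.

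For the upper bound, your overall scheme (stability $+$ Gauss equation $\Rightarrow$ integral curvature bound $\Rightarrow$ control of area growth $\Rightarrow$ bound on $\lambda^*$ via the radial test function $e^{-\alpha\rho}$) is essentially Candel's, and the algebra is right: stability together with the Gauss--Bonnet computation does yield $(7\alpha^2-4)\,J(\alpha)\le 4\pi$ whenever $J(\alpha)<\infty$ and the boundary terms vanish. The genuine gap is the step ``as $2\alpha\downarrow\mu$ one has $J(\alpha)\to\infty$''. This is \emph{not} automatic from $\mu=\limsup_{t\to\infty}t^{-1}\log A(t)$. A growth profile such as $A(t)\sim e^{\mu t}/t^{2}$ has the same exponent $\mu$ but gives $J(\mu/2)<\infty$, so no blow-up occurs. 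Equivalently, if you set $\alpha_{0}:=\inf\{\alpha>0:\;J(\alpha)<\infty\}$ and suppose for contradiction that $\alpha_{0}>2/\sqrt{7}$, then your inequality only yields $J(\alpha_{0})\le 4\pi/(7\alpha_{0}^{2}-4)<\infty$ by monotone convergence, which is no contradiction at all; one cannot push the inequality strictly below $\alpha_{0}$ from this alone. Your comparison $A(t)\ge 2\pi(\cosh t-1)$ gives $\mu\ge 1$ but does not force $J(\alpha)\to\infty$ at the threshold.

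To close the argument you must either supply a bootstrap that extends the admissible range of $\alpha$ strictly below $\alpha_{0}$ (for instance by feeding a slightly different radial test function back into the stability inequality once $J(\alpha_{0})<\infty$ is known), or bypass the growth exponent entirely and link the stability operator to $\lambda^{*}$ directly. You correctly flagged this limit as the most delicate point; as written it is the step that does not go through.
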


Using only as  hypothesis the finiteness of the $L^2-$norm of the second fundamental form, K. Seo has recently generalized  the result of Theorem \ref{candel1} without using the hypothesis about the simply connectedness

\begin{theorem} [See \cite{Seo}]
Let $M^n$ be a complete stable minimal hypersurface in $\mathbb{H}^{n+1}(-1)$ with $\int_M\vert A\vert^2d\mu<\infty$. Then we have
\begin{equation}
\frac{\left(n-1\right)^2}{4}\leq \lambda^*(M)\leq n^2\quad .
\end{equation}
\end{theorem}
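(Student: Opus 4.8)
The plan is to establish the two inequalities separately. The lower bound $\frac{(n-1)^2}{4}\le\lambda^*(M)$ is \emph{not} special to the stable case: it follows immediately from the Cheung–Leung Theorem \ref{Che-Leu}, since every complete minimal submanifold of $\mathbb{H}^{n+1}(-1)$ inherits the ambient McKean-type lower bound with $\kappa=-1$, giving $\lambda^*(M)\ge\frac{(n-1)^2}{4}$. So the entire substance of the theorem is the upper bound $\lambda^*(M)\le n^2$, and this is where the stability and the $L^2$-finiteness of $A$ will be used.

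For the upper bound I would exploit the variational characterization \eqref{Ray}: to produce the bound $\lambda^*(M)\le n^2$ it suffices to construct, for every $\varepsilon>0$, a compactly supported test function $f$ with Rayleigh quotient at most $n^2+\varepsilon$, or equivalently to find a sequence whose quotients converge to something $\le n^2$. The natural strategy is to feed into the stability inequality a cleverly chosen function and then play it against the Rayleigh quotient. Concretely, stability gives, for all $\varphi\in C_0^\infty(M)$,
\begin{equation*}
\int_M\bigl(|A|^2+n\bigr)\varphi^2\,d\mu\le\int_M|\nabla\varphi|^2\,d\mu,
\end{equation*}
where the $n$ comes from the Ricci curvature of the ambient $\mathbb{H}^{n+1}(-1)$ restricted to $M$ (the minimal hypersurface stability operator is $-\Delta-|A|^2-\overline{\mathrm{Ric}}(\nu,\nu)$ with $\overline{\mathrm{Ric}}(\nu,\nu)=-n$). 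The key first step is to analyze the term $\int_M|A|^2\varphi^2\,d\mu$: because $\int_M|A|^2\,d\mu<\infty$, one expects $|A|$ to decay, and in particular, by a result on the asymptotic behavior of the second fundamental form under finite total curvature (which forces $|A|\to 0$ at infinity), the contribution of $|A|^2$ can be made negligible outside a large compact set. Then on the complement of that compact set the stability inequality effectively reads $\int n\varphi^2\le\int|\nabla\varphi|^2+(\text{small})$, which is the right normalization to extract a bound involving $n^2$.

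The second, more technical step is to convert the stability inequality into a bound on the Rayleigh quotient with the correct constant $n^2$ rather than merely $n$. Here I would insert a test function of the form $\varphi=f^{1/2}$ or more precisely use a logarithmic-cutoff/square-root substitution: writing $\psi=\varphi^2$ and computing $|\nabla\varphi|^2=\frac{1}{4}\frac{|\nabla\psi|^2}{\psi}$ lets one transfer the gradient estimate. The cleaner route is to take in the Rayleigh quotient a radial test function $f=f(r)$ built from the intrinsic or extrinsic distance $r$, chosen so that its gradient concentrates the quotient near the value $n^2$; the factor-of-$n$ jump from the stability bound to $n^2$ typically arises from optimizing $\int|\nabla f|^2/\int f^2$ over exponential profiles $f=e^{-\alpha r}$ on a space whose volume grows like $e^{nr}$ (the hyperbolic growth rate of an $n$-dimensional minimal hypersurface), where the optimal $\alpha$ yields exactly $\lambda=n^2$ as the top of the continuous spectrum.

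The main obstacle will be controlling the $\int_M|A|^2\varphi^2$ term rigorously and justifying that finite total curvature forces enough decay of $|A|$ to make this term asymptotically irrelevant for the test functions used; this is where the $L^2$-hypothesis does its real work, and it must be combined carefully with a volume-growth estimate for $M$ so that the cutoff functions supported near infinity have controlled gradient integrals. A secondary difficulty is handling the possibly non-simply-connected topology: one cannot assume a single global distance-based exhaustion behaves as in the simply connected case, so the cutoff functions must be chosen with respect to a proper exhaustion of $M$ and the estimates phrased in terms of the volume of annuli. Once those two analytic points are secured, combining the refined stability inequality with the optimal exponential test profile yields $\lambda^*(M)\le n^2$, and together with the Cheung–Leung lower bound this completes the proof.
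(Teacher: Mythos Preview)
First, note that the paper does \emph{not} give its own proof of this statement: it is quoted verbatim from Seo's paper as background, so there is no in-paper argument to compare against. Your proposal therefore has to be judged against the actual mechanism in \cite{Seo}.

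Your treatment of the lower bound is correct and matches the literature: Theorem~\ref{Che-Leu} with $\kappa=-1$ gives $\lambda^*(M)\ge (n-1)^2/4$ immediately, with no use of stability or of the $L^2$ hypothesis.

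The upper-bound argument, however, contains two genuine errors.

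\textbf{(i) Sign of the stability inequality.} You correctly note that $\overline{\mathrm{Ric}}(\nu,\nu)=-n$ in $\mathbb{H}^{n+1}(-1)$, but then write the stability inequality with the wrong sign. The Jacobi operator is $-\Delta-|A|^2-\overline{\mathrm{Ric}}(\nu,\nu)=-\Delta-|A|^2+n$, so stability reads
\[
\int_M |\nabla\varphi|^2\,d\mu \;\ge\; \int_M\bigl(|A|^2-n\bigr)\varphi^2\,d\mu,
\]
not $\int(|A|^2+n)\varphi^2\le\int|\nabla\varphi|^2$. In hyperbolic ambient the curvature term \emph{helps} the stability inequality, it does not strengthen the coercivity.

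\textbf{(ii) Direction of the bound.} Even accepting your (incorrect) form, an inequality $\int|\nabla\varphi|^2\ge c\int\varphi^2$ valid for all compactly supported $\varphi$ yields $\lambda^*(M)\ge c$, a \emph{lower} bound. Your plan to ``convert the stability inequality into a bound on the Rayleigh quotient'' for the upper estimate cannot work by this direct route: stability is a positivity statement and produces lower bounds only. The exponential-profile heuristic you invoke at the end is also off by a factor of $4$: a radial test function on a space with volume growth $\sim e^{\beta r}$ gives $\lambda^*\le\beta^2/4$, not $\beta^2$, so even with growth $e^{nr}$ you would obtain $n^2/4$, not $n^2$.

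\textbf{What Seo actually does.} Stability enters \emph{indirectly}: one plugs $\varphi=|A|\psi$ into the (correctly signed) stability inequality and combines it with the Simons-type inequality for $\Delta|A|^2$ in hyperbolic ambient. Together with the hypothesis $\int_M|A|^2\,d\mu<\infty$, this yields a \emph{pointwise} bound on $|A|^2$. Via the Gauss equation this gives a uniform lower bound on the Ricci curvature of $M$, and then Cheng's eigenvalue comparison theorem supplies the upper bound $\lambda^*(M)\le n^2$. The chain is: stability $+$ Simons $\Rightarrow$ $\sup_M|A|^2<\infty$ $\Rightarrow$ $\mathrm{Ric}_M$ bounded below $\Rightarrow$ Cheng's upper bound. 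None of these steps appears in your outline, and the missing idea (stability controls $|A|$ pointwise, not $\lambda^*$ directly) is the heart of the proof.
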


Another achievement of this paper is that using only the finiteness of the $L^2-$norm of the second fundamental form (Theorem A), or an appropriate decay of the norm of the second fundamental form, we can also remove the hypothesis about the stability, and obtain not only an inequality but an equality on the fundamental tone.

\begin{remark}
Note that the finiteness of the $L^2$-norm of the second fundamental form of a minimal surface in the hyperbolic space does not imply the stability of the surface. In fact, M. do Carmo and M. Dajczer also proved in \cite{DoDa} that there exist some unstable spherical  catenoids in $\mathbb{H}^3(-1)$. And observe, moreover, that the codimension of the submanifold plays no role in our theorems, in contrast to what happens in the theorems from A. Candel and K. Seo where the codimension must be $1$.
\end{remark}

The structure of the paper is as follows.

In \S \ref{prelim} we recall the definition of the extrinsic distance
function, the extrinsic ball and the volume growth function
$\Q$.  Showing that under the hypothesis of Theorems A, B, and C the
submanifold has finite volume growth
\begin{equation}
\sup_{R>0}\Q(R)<\infty\quad .
\end{equation}  

In \S \ref{main-prop} we will prove the Main
Theorem.  Finally as a corollary from the proof of the Main Theorem, supposing that the submanifold has an extrinsic doubling property we state the Theorem \ref{two-sides} where is obtained lower and upper bounds for the fundamental tone of a minimal submanifold properly immersed in a Cartan-Hadamard ambient manifold.

 \section{Preliminaries}\label{prelim}
The proof of Theorems A, B and C is based on upper and lower bounds for the fundamental tone. The lower bounds are well known (see Theorem \ref{Che-Leu}), but to obtain upper bounds we use the so called \emph{volume growth function} and its relation to the behavior of the extrinsic curvature. The volume growth function is the quotient between the volume of an extrinsic ball of radius $R$ and the geodesic ball of the same radius $R$ in an appropriate real space form.

Let $\kan$ denote the $n-$dimensional simply connected real space form of constant sectional curvature $\K \leq 0$, recall that  the volume of the  geodesic sphere $S^\K_R$ and the geodesic ball $B_R^\K$ of radius $R$ in $\kan$ are (see \cite{Tubes}) 
$$
\Sr=\omega_n\text{S}_\K(R)^{n-1}\quad \Br=\int_0^R \Ss ds\quad ,
$$
being $\omega_n$ the volume of the geodesic sphere of radius $1$ in $\erre^n$, where $\text{S}_\kappa$ is the usual function

$$
\text{S}_\K(t)=\begin{cases}
t\quad \text{if}\quad \K=0,\\
\frac{\sinh(\sqrt{-\K}t)}{\sqrt{-\K}}\quad\text{if}\quad \K<0\quad .
\end{cases} 
$$

\noindent And the mean curvature pointing inward of the geodesic spheres of radius $R$  is  

$$
\Ct (t)= \frac{\text{S}_\kappa'(t)}{\text{S}_\kappa(t)}\quad.
$$

On the other hand, given a submanifold $M$ immersed in a simply connected Cartan-Hadamard manifold  $N$ of sectional curvature $K_N$ bounded from a above by $K_N\leq \kappa\leq 0$, the extrinsic ball $M_p^R$ centered at $p\in M$ of radius $R$ is the sublevel set of the extrinsic distance function $r_p$, where the extrinsic distance function is
\begin{definition}[Extrinsic distance function] Let $\varphi: M\to N$  be an immersion from the manifold $M$ to the simply connected Cartan-Hadamard manifold of  sectional curvature $K_N$ bounded from above by $K_N\leq \kappa\leq 0$. Given a point $p\in M$, the extrinsic distance function $r_p:M\to \erre^+$ is defined by
$$
r_p(x)=\text{dist}^{N}(\varphi(p),\varphi(x))\quad ,
$$ 
where $\text{dist}^{N}$ denotes the usual geodesic distance function in $N$.
\end{definition}
Therefore the extrinsic ball $M_p^R$ centered at $p\in M$ of radius $R$ is
$$
M_p^R:=\left\{x\in M\,:\,r_p(x)<R\right\}\quad.
$$

With the extrinsic ball and the geodesic ball we can define the volume growth function

\begin{definition} [Volume growth function] Let $\varphi: M^n\to N$  be an immersion from the $n-$dimensional manifold $M$ to the simply connected Cartan-Hadamard manifold of sectional curvature $K_N$ bounded from above by $K_N\leq \kappa\leq 0$. The volume growth function $\Q: \erre^+ \to \erre^+$ is given by 
$$
\Q(R):=\frac{\Vol(M_p^R)}{\Br}\quad .
$$
Where $\Br$ is the geodesic ball of radius $R$ in $\kan$.
\end{definition}

\subsection{Volume growth  of minimal submanifolds of controlled extrinsic curvature}\label{finite-growth}

Firstly  to  prove the Theorems A, B and C we have to  study the behavior of  the volume comparison function for  minimal submanifolds of a Cartan-Hadamard manifold. Let us recall the following monotonicity formula

\begin{proposition}[Monotonicity formula, see \cite{Palmer}]\label{Palmer}
Let $\varphi: M^n\to N$  be an immersion from the $n-$dimensional manifold $M$ to the simply connected Cartan-Hadamard manifold of sectional curvature $K_N$ bounded from above by $K_N\leq \kappa\leq 0$. Then, the volume growth function is a non-decreasing function.
\end{proposition}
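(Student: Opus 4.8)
The plan is to derive the standard first-variation / divergence identity for the extrinsic distance function on a minimal submanifold and integrate it over the extrinsic ball. First I would recall that for a minimal immersion $\varphi:M^n\to N$ into a Cartan-Hadamard manifold with $K_N\le \kappa\le 0$, the Laplacian comparison theorem applied to $r=r_p$ gives a pointwise lower bound for the intrinsic Laplacian of $r$ on $M$: namely, writing $\Delta^M$ for the Laplacian of the induced metric, one has $\Delta^M r \ge (n-1)\Ct(r)$ at smooth points of $r$, because $\Delta^M r = \Delta^N r + \langle \grad^N r, \vec H\rangle + (\text{correction terms from the second fundamental form that vanish by minimality})$, and the Hessian comparison on $N$ controls $\Delta^N r$ restricted to the tangent space of $M$ from below by $(n-1)\Ct(r)$ since $\kappa$ is an upper bound for $K_N$. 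Minimality kills the mean-curvature term, so the inequality survives on $M$.

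Next I would feed this into a coarea / divergence argument. For $f$ a smooth non-increasing function of $r$ one computes $\Div_M\bigl(f(r)\grad^M r\bigr) = f'(r)\,|\grad^M r|^2 + f(r)\,\Delta^M r$, and since $|\grad^M r|\le 1$ and $f'\le 0$ one gets, after integrating over $M_p^R$ and using the divergence theorem together with the Laplacian estimate, a differential inequality for $V(R):=\Vol(M_p^R)$ comparing it to $\Br$. Concretely, choosing the comparison weight adapted to the model space one shows that $R\mapsto V(R)/\Br$ has non-negative derivative in the distributional sense; equivalently $\frac{d}{dR}\log\Vol(M_p^R)\ge \frac{d}{dR}\log \Br = (n-1)\Ct(R)$ wherever these are differentiable, which by the coarea formula is exactly the statement $\frac{d}{dR}\Vol(S_R^{M})\cdot \Vol(M_p^R)^{-1} \ge \cdots$, and integrating yields monotonicity of $\Q(R)=V(R)/\Br$. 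I would be careful to phrase the divergence-theorem step so that it is valid across the non-smooth locus of $r$ (the cut locus in $M$), either by working with the absolutely continuous part of $\Delta^M r$ or by a standard approximation/Federer–Fleming argument; this is the main technical obstacle, though it is by now routine in this literature and one may simply cite \cite{Palmer} for the precise form.

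Finally I would note that the inequality $|\grad^M r|\le 1$ is automatic since $\varphi$ is distance-non-increasing in the relevant sense (the extrinsic distance is the ambient distance restricted to $M$), and that properness of the immersion is what guarantees $M_p^R$ has compact closure so that all the integrals are finite and the divergence theorem applies. Assembling these pieces gives that $\Q$ is non-decreasing, which is the assertion of the proposition; the limit $\lim_{R\to\infty}\Q(R)$ then exists in $(0,\infty]$, which is precisely the quantity appearing in the hypothesis of the Main Theorem.
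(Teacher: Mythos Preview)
The paper does not actually prove this proposition: it is stated with the tag ``see \cite{Palmer}'' and no argument is given, so there is nothing to compare your proposal against. Your outline is essentially the standard proof one finds in \cite{Palmer}: Hessian comparison in $N$ plus minimality yields a Laplacian lower bound for a transplanted model function, and the divergence theorem on $M_p^R$ converts this into the differential inequality that forces $\Q$ to be non-decreasing.

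That said, there are two genuine slips in your write-up that you should fix. First, the identity you assert, $\frac{d}{dR}\log\Br=(n-1)\Ct(R)$, is false: $(\log\Br)'=\Sr/\Br$, whereas $(n-1)\Ct(R)=(\log\Sr)'$, and in fact $\Sr/\Br>(n-1)\Ct(R)$ for all $R>0$. So the bare inequality $\Delta^M r\ge (n-1)\Ct(r)$ that you derive is \emph{not} strong enough by itself to yield $(\log\Vol(M_p^R))'\ge(\log\Br)'$. The correct execution of your own phrase ``choosing the comparison weight adapted to the model space'' is to work with the function $F$ satisfying $F'(r)=\Br/\Sr$ (equivalently $F''+(n-1)\Ct\,F'=1$), use the sharp comparison $\Delta^M r\ge (n-|\nabla^M r|^2)\Ct(r)$, and obtain $\Delta^M F(r)\ge 1$ on $M$; integrating this over $M_p^R$ gives $\Vol(\partial M_p^R)/\Vol(M_p^R)\ge \Sr/\Br$, which \emph{is} $(\log\Vol(M_p^R))'\ge(\log\Br)'$ after one more application of $|\nabla^M r|\le 1$. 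Second, your caveat about the cut locus is unnecessary here: $N$ is Cartan--Hadamard, so the ambient distance $r_p$ is smooth away from $p$, and there is no singular set to worry about. (A minor stylistic point: in your formula for $\Delta^M r$ there are no ``extra correction terms from the second fundamental form'' beyond the mean-curvature term; the identity is simply $\Delta^M r=\tr_{TM}\Hess^N r+\langle\nabla^N r,\vec H\rangle$.) Finally, note that the statement as printed omits the word ``minimal''; your instinct to insert it is correct, and you should flag that the monotonicity fails without it.
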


Secondly to prove Theorems A, B, and C we need  to
check that an appropriate control of their second fundamental form
allow us to apply the Main Theorem (\S \ref{main-prop}). In order
to apply the Main Theorem the submanifold should have finite volume growth
\begin{equation}
\lim_{R\to\infty}\Q(R)<\infty\quad.
\end{equation}

But we can apply the Main Theorem under the assumptions of the Theorems A, B and C because of the following three theorems that we recall here:

\begin{theorem}[See \cite{A1},\cite{ChSubv} and \cite{GP}]
Let $M^n$ be a minimal submanifold immersed in the Euclidean space $\erre^m$. If $M^n$ has finite total scalar curvature 

$$\int_M\vert A\vert^n d\mu<\infty\quad.$$
Then
$$
\sup_{R>0}\Q(R)<\infty\quad .
$$ 

\end{theorem}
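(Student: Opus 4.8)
The plan is to derive the bound $\sup_{R>0}\Q(R)<\infty$ from the classical description of the asymptotic geometry of complete minimal submanifolds of $\erre^m$ with finite total scalar curvature. To begin, observe that by the monotonicity formula (Proposition~\ref{Palmer}) the function $\Q$ is non-decreasing, so $\sup_{R>0}\Q(R)=\lim_{R\to\infty}\Q(R)$; since the ambient space is flat we have $\kappa=0$, hence the model space form is $\erre^n$ and $\Br=\frac{\omega_n}{n}R^{n}$. Thus it suffices to produce a constant $C$ with $\Vol(M_p^{R})\le C\,R^{n}$ for all large $R$, i.e. to bound the density at infinity $\Theta_\infty:=\lim_{R\to\infty}\Vol(M_p^{R})/\Br$.

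The substantive input I would use is the structure theory of Anderson (and, in dimension two, the Chern--Osserman theory), which is what the cited references provide: under $\int_M|A|^{n}\,d\mu<\infty$ the immersion is automatically proper, $M$ has finite topological type, and one has a decomposition $M=M_0\cup E_1\cup\cdots\cup E_k$ into a compact core $M_0$ and finitely many ends $E_1,\dots,E_k$, each end being \emph{regular at infinity}: there is an affine $n$-plane $P_i\subset\erre^m$ and $R_0>0$ such that $E_i$ is the graph $y\mapsto(y,u_i(y))$ over $\{y\in P_i:\ |y|>R_0\}$ of a map $u_i$ with $|u_i|$ bounded and $|Du_i(y)|\to 0$ as $|y|\to\infty$.

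Granting this, the volume count is routine. Let $x=(y,u_i(y))\in E_i$ with $x\in M_p^{R}$; since $|u_i|$ is bounded and $\varphi(p)$ lies at a fixed distance from $P_i$, its projection $y$ lies in a disc of radius $R+C_i$ in $P_i$ for a constant $C_i$ independent of $R$. Because $|Du_i|\to 0$, given $\varepsilon>0$ there is $R_1\ge R_0$ with $\sqrt{\det\bigl(I+(Du_i)^{\mathsf T}Du_i\bigr)}\le 1+\varepsilon$ on $\{|y|\ge R_1\}$, while the slab $R_0<|y|\le R_1$ contributes a fixed finite volume $V_i'$. Hence, integrating the area element over the projection,
\[
\Vol(M_p^{R})\ \le\ \Vol(M_0)+\sum_{i=1}^{k}\Bigl(V_i'+(1+\varepsilon)\tfrac{\omega_n}{n}(R+C_i)^{n}\Bigr).
\]
Dividing by $\Br=\frac{\omega_n}{n}R^{n}$ and letting first $R\to\infty$ and then $\varepsilon\to0$ gives $\lim_{R\to\infty}\Q(R)\le k<\infty$; in fact $\Theta_\infty=k$, the number of ends.

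The only genuine obstacle is the structure-at-infinity step, and in this paper it is legitimate to quote it; for completeness, the mechanism I have in mind is: (i) show $|A|_x\to0$ as $x\to\infty$ by running De~Giorgi--Nash--Moser iteration on the Simons-type inequality $\Delta|A|^{2}\ge -c\,|A|^{4}$ --- which, via Kato's inequality, makes $|A|$ a nonnegative subsolution of a Schr\"odinger operator $\Delta+c|A|^{2}$ with $L^{n/2}$ potential --- using the Michael--Simon Sobolev inequality together with $\int_{M\setminus B^{M}_{R}(p)}|A|^{n}\,d\mu\to0$; (ii) once $|A|$ is uniformly small on an end, deduce that the Gauss map extends continuously across infinity and that the end is a small-gradient graph over its asymptotic tangent plane; (iii) a connectedness argument shows there is no topology at infinity, so each end is a single such graph, whence properness and finiteness of the number of ends follow. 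In dimension two one may instead invoke directly that a complete minimal surface with $\int_M|A|^{2}\,d\mu<\infty$ is conformally a closed Riemann surface with finitely many punctures, all of whose ends are planar, and then the same volume count applies verbatim.
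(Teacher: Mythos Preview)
Your argument is correct, but note that the paper does not actually prove this theorem: it is one of three results explicitly \emph{recalled} from the literature (see the sentence ``because of the following three theorems that we recall here'' preceding the statement), with the proof deferred to the cited references of Anderson, Chen, and Gimeno--Palmer. What you have written is precisely the content of those references --- the Anderson structure theorem for minimal submanifolds of $\erre^m$ with $\int_M|A|^n\,d\mu<\infty$, followed by the straightforward density count $\Theta_\infty=k$ --- so your proposal is exactly in line with what the paper is invoking, just made explicit rather than quoted.

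One minor remark: in your volume count you do not actually need $|u_i|$ to be bounded. Writing $x=(y,u_i(y))$ and $\varphi(p)=(p_1,p_2)$ with $p_1\in P_i$, the orthogonal projection onto $P_i$ is $1$-Lipschitz, so $|y-p_1|\le |x-\varphi(p)|\le R$ and hence $|y|\le R+|p_1|$ regardless of the size of $u_i$. This is relevant because for $n=2$ the graph function can grow logarithmically (the catenoid), while for $n\ge 3$ Anderson indeed shows boundedness; either way your estimate goes through.
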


\begin{theorem}[See \cite{ChOss}, \cite{GP} and \cite{Che3}]
Let $M^2$ be a minimal surface immersed in the hyperbolic space $\Ham$ of constant sectional curvature $\kappa<0$ or in the Euclidean space $\erre^m$. If $M$ has  finite total extrinsic curvature, namely  $\int_M\vert A \vert^2 d\mu<\infty$, then $M$ has finite topological type, and
$$
\sup_{R>0} \Q(R)\leq \frac{1}{4} \int_M\vert A \vert^2 d\mu+\chi(M)\quad ,
$$
being $\chi(M)$ the Euler characteristic of $M$.
\end{theorem}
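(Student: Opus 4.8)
This is a known result: for the Euclidean ambient it goes back to Chern--Osserman (\cite{ChOss}), and the packaging into the volume growth function is carried out in \cite{GP}; here is the line of argument I would follow. Its two ingredients are the Gauss equation and Gauss--Bonnet applied to the exhaustion of $M$ by extrinsic balls $M_p^t$. For a minimal surface $M^2$ in a space form of constant curvature $\kappa\le 0$ one has pointwise $K_M=\kappa-\tfrac12\vert A\vert^2$, so the hypothesis $\int_M\vert A\vert^2\,d\mu<\infty$ is equivalent to $\int_M(\kappa-K_M)\,d\mu<\infty$.

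\emph{Finite topological type.} In the Euclidean case $K_M=-\tfrac12\vert A\vert^2\le 0$ is integrable, so Huber's theorem gives that $M$ is conformally a closed surface $\bar M$ minus finitely many points $q_1,\dots,q_k$; moreover the generalized Gauss map extends to $\bar M$ and each end is a finite multi-graph over a plane. In the hyperbolic case $K_M$ itself need not be integrable (it is $\le\kappa<0$ while $\Vol(M)$ may be infinite), but $K_M-\kappa$ is, and one appeals to the corresponding Huber-type theorem for surfaces of asymptotically constant curvature $\kappa$ together with the description of the ends near $\partial_\infty\Ham$ (\cite{Che3}). In either case $M=\bar M\setminus\{q_1,\dots,q_k\}$, so for all large $t$ the extrinsic ball $M_p^t$ is connected and diffeomorphic to $\bar M$ with $k$ disjoint disks removed, whence $\chi(M_p^t)=\chi(M)$.

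\emph{The volume bound.} Put $V(t):=\Vol(M_p^t)$. For a.e.\ $t$ large, Gauss--Bonnet on $M_p^t$ reads $\int_{\partial M_p^t}\kappa_g\,ds=2\pi\chi(M)-\kappa V(t)+\tfrac12\int_{M_p^t}\vert A\vert^2\,d\mu$, where $\kappa_g$ is the geodesic curvature of $\partial M_p^t$ in $M$. Along $\partial M_p^t$ the tangent vectors are orthogonal to $\nabla^N r$, and in a space form $\Hess^N r=\Ct(r)\,(g-dr\otimes dr)$; restricting $\Hess^N r$ to $M$ and adding the second fundamental form term gives $\kappa_g=\Ct(t)\,\vert\nabla^M r_p\vert^{-1}+(\text{a term linear in }A)$, so with the co-area formula $V'(t)=\int_{\partial M_p^t}\vert\nabla^M r_p\vert^{-1}\,ds$ this becomes
\begin{equation*}
\Ct(t)\,V'(t)+\kappa V(t)=2\pi\chi(M)+\tfrac12\int_{M_p^t}\vert A\vert^2\,d\mu+\mathcal E(t),\qquad \vert\mathcal E(t)\vert\le\int_{\partial M_p^t}\frac{\vert A\vert}{\vert\nabla^M r_p\vert}\,ds .
\end{equation*}
The $2$-dimensional model volume $b(t):=\Vol(B_t^\kappa)$ satisfies the same identity with right-hand side $2\pi$, so $W(t):=V(t)-\chi(M)\,b(t)$ obeys $\Ct(t)W'(t)+\kappa W(t)=\tfrac12\int_{M_p^t}\vert A\vert^2\,d\mu+\mathcal E(t)$. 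Integrating this with integrating factor $1/\text{S}_\kappa'$, using $\int_t^\infty\text{S}_\kappa(s)\,\text{S}_\kappa'(s)^{-2}\,ds=\big(-\kappa\,\text{S}_\kappa'(t)\big)^{-1}$ and $b(t)/\text{S}_\kappa'(t)\to 2\pi/(-\kappa)$, and bounding $\int_{M_p^s}\vert A\vert^2\le\int_M\vert A\vert^2$, gives $\limsup_{t\to\infty}V(t)/b(t)<\infty$. Since $\Q(t)=V(t)/b(t)$ is non-decreasing (Proposition~\ref{Palmer}), $\sup_{t>0}\Q(t)=\lim_{t\to\infty}\Q(t)$ and is finite. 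The precise constant $\chi(M)+\tfrac14\int_M\vert A\vert^2$ then follows from the end structure of the first step: the density at infinity $\lim_{t\to\infty}V(t)/b(t)$ equals the number of ends counted with multiplicity, which the Chern--Osserman--Jorge--Meeks identity $\sum_j d_j=\chi(M)+\tfrac{1}{4\pi}\int_M\vert A\vert^2$ (and its hyperbolic analogue) bounds by $\chi(M)+\tfrac14\int_M\vert A\vert^2$.

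The point I expect to be the main obstacle is the control of the error $\mathcal E(t)$: one only knows $\int_M\vert A\vert^2<\infty$, and since $\Vol(M)$ may be infinite $\int_M\vert A\vert\,d\mu$ need not be finite, so $\mathcal E(t)$ is not a priori bounded. The way around this is a weighted Cauchy--Schwarz estimate $\vert\mathcal E(t)\vert^2\le V'(t)\int_{\partial M_p^t}\vert A\vert^2\vert\nabla^M r_p\vert^{-1}\,ds$, combined with the exponential decay of the model weight $\text{S}_\kappa(s)\,\text{S}_\kappa'(s)^{-2}$, the co-area identity $\int_0^\infty\!\big(\int_{\partial M_p^s}\vert A\vert^2\vert\nabla^M r_p\vert^{-1}\,ds\big)\,dt=\int_M\vert A\vert^2\,d\mu$ (so the tails vanish), and the monotonicity of $V(t)/b(t)$ to absorb the surviving factor $V'$. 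The other delicate point is the first step in the hyperbolic setting, where the Huber-type argument must be run for metrics that are only asymptotically of constant curvature $\kappa$ rather than asymptotically flat.
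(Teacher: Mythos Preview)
The paper does not give its own proof of this statement: it is simply \emph{recalled} from the literature (references \cite{ChOss}, \cite{GP}, \cite{Che3}) as one of three known volume-growth results needed to feed the Main Theorem. There is therefore nothing in the paper to compare your argument against.

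That said, your sketch is broadly the right shape for the arguments in those references: the Gauss equation $K_M=\kappa-\tfrac12\vert A\vert^2$, a Huber-type finite-topology result (classical in $\erre^m$, the version for $\Ham$ being precisely the content of \cite{Che3}), and then Gauss--Bonnet on the exhaustion by extrinsic balls combined with the monotonicity of $\Q$ from Proposition~\ref{Palmer}. The obstacle you single out---controlling the boundary error $\mathcal E(t)$ coming from the second-fundamental-form contribution to the geodesic curvature of $\partial M_p^t$---is genuine and is exactly where the work lies in \cite{GP} and \cite{Che3}; your Cauchy--Schwarz-plus-coarea idea is the standard way in. One caution: in the hyperbolic case your appeal to a ``Huber-type theorem for surfaces of asymptotically constant curvature $\kappa$'' is doing a lot of work and is not a direct consequence of the classical Huber theorem; this is precisely what \cite{Che3} establishes, so you should treat it as an external input rather than something that follows easily.
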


\noindent And
 
\begin{theorem}[see \cite{GPGap}]
Let $M^n$ be a minimal $n-$dimensional submanifold properly immersed in the hyperbolic space $\Ham$ of constant sectional curvature $\kappa<0$.  If $n>2$ and  the submanifold is of faster than exponential decay of its extrinsic curvature,  namely,  there exists a point $p\in M$ such that  
$$
\vert A\vert_x \leq \frac{\delta(r_p(x))}{e^{2\sqrt{-\kappa}r_p(x)}} \quad ,
$$ 
where $\delta(r)$ is a function such that $\delta(r)\to 0$ when $r\to \infty$. Then the submanifold has finite topological type, and
$$
\sup_{R>0} \Q(R)\leq \mathcal{E}(M)\quad ,
$$
being $\mathcal{E}(M)$ the (finite) number of ends of $M$.
\end{theorem}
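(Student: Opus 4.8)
The plan is to extract both conclusions from the asymptotic behaviour of the extrinsic distance function $r_p$, which is a proper exhaustion of $M$ since the immersion is proper. The basic tool is the Hessian comparison for the restriction of the ambient distance to a minimal submanifold: for $X$ tangent to $M$,
\begin{equation*}
\Hess^M r(X,X)=\Hess^N r(X,X)+\langle \alpha(X,X),\grad r\rangle,
\end{equation*}
where $\alpha$ is the vector-valued second fundamental form and $\grad r=\partial_r$ is the ambient radial field. Because the ambient space is exactly $\Ham$ and $M$ is minimal, tracing over an orthonormal basis of $T_xM$ annihilates the mean curvature and yields the exact identity $\Delta^M r=\sqrt{-\K}\coth(\sqrt{-\K}\,r)\bigl(n-|\grad r|^2\bigr)$, while all the off-diagonal information is carried by the term $\langle\alpha,\grad r\rangle$, whose size is bounded by $|A|$. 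The quantity I would track is the \emph{tilt} $\tau:=1-|\grad r|^2=|(\partial_r)^\perp|^2\in[0,1]$, which measures the failure of $M$ to be radial.

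First I would show that $\tau\to0$ uniformly on the level sets $\{r_p=t\}$ as $t\to\infty$. Along an integral curve of $\grad r/|\grad r|^2$ the tilt satisfies a Riccati-type differential inequality whose linear part, coming from the strictly negative ambient curvature, is dissipative, and whose inhomogeneous term is controlled by $C|A|\le C\,\delta(r_p)\,e^{-2\sqrt{-\K}r_p}$. A Gronwall argument then forces $\tau(t)\to0$, uniformly in the point of the level set because $\delta$ depends only on $r$; equivalently $|\grad r|\to1$.

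From $|\grad r|\to1$ the topological conclusion is immediate. Choose $r_0$ with $|\grad r|>0$ on $\{r_p\ge r_0\}$; then $r_p$ has no critical points there, and the reparametrised gradient flow of $\grad r/|\grad r|^2$ gives a diffeomorphism $M\setminus M_p^{r_0}\cong\{r_p=r_0\}\times[r_0,\infty)$. Properness makes $\{r_p=r_0\}$ a compact hypersurface of $M$, hence with finitely many components, which are exactly the ends; thus $\mathcal{E}(M)<\infty$ and $M$ has finite topological type. For the volume estimate I would then pass to boundary densities: by the coarea formula $\Vol(M_p^R)=\int_0^R V(t)\,dt$ with $V(t)=\int_{\{r_p=t\}}|\grad r|^{-1}\,dA_t$, so $\Q(R)=\bigl(\int_0^R V\bigr)/\bigl(\int_0^R\Ss\,ds\bigr)$. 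Differentiating and using that $\Q$ is non-decreasing (Proposition~\ref{Palmer}) gives $V(t)/\Vol(S_t^\K)\ge\Q(t)$, whence $\sup_R\Q(R)=\lim_{R\to\infty}\Q(R)=\lim_{t\to\infty}V(t)/\Vol(S_t^\K)$ once the right-hand limit exists. Splitting $V(t)$ over the finitely many ends and using $|\grad r|\to1$ reduces the whole matter to showing that each end's cross-sectional density tends to $1$; summing then yields $\sup_R\Q(R)\le\mathcal{E}(M)$.

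The main obstacle is precisely this last point, and it is where the exact exponent $2\sqrt{-\K}$ is consumed. I would show that under $|A|\le\delta(r)\,e^{-2\sqrt{-\K}r}$ each end is asymptotic to a totally geodesic $\mathbb{H}^n(\K)\subset\Ham$: writing the end as a normal graph over such a slice, the normal displacement solves a Jacobi-type equation whose growing mode has rate $\sqrt{-\K}$, and the decay of $|A|$ at rate $2\sqrt{-\K}$ is exactly what selects the decaying mode, so that the graph, its tangent planes, and hence the induced $(n-1)$-area on $\{r_p=t\}$ all converge to those of the totally geodesic model, whose cross-section is exactly $S_t^\K$ and whose density is $1$. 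Making this integration of the normal Jacobi equation uniform over the level set, and verifying that the genuine threshold is $2\sqrt{-\K}$ rather than a weaker rate, is the delicate step; the hypothesis $n>2$ enters here, the borderline case $n=2$ being treated separately through the finiteness of $\int_M|A|^2\,d\mu$ and Gauss--Bonnet.
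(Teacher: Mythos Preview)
The paper does not prove this theorem at all: it is quoted verbatim from \cite{GPGap} as one of three external results used to verify the finite-volume-growth hypothesis of the Main Theorem. There is therefore no proof in the present paper to compare your proposal against.

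As for the substance of your sketch, the overall architecture matches what one finds in \cite{GPGap} and related work: use the Hessian/Laplacian comparison for $r_p$ on a minimal submanifold of $\Ham$, prove that the tilt $1-|\grad r|^2$ decays to zero, deduce that $r_p$ is eventually free of critical points so that $M$ has finite topological type and finitely many ends, and then compare the boundary density $V(t)/\Vol(S_t^{\kappa})$ on each end with that of a totally geodesic $\mathbb{H}^n(\kappa)$. Two points deserve care. First, your passage from $\Q$ non-decreasing and $V(t)/\Vol(S_t^{\kappa})\ge \Q(t)$ to $\sup_R\Q(R)=\lim_{t\to\infty}V(t)/\Vol(S_t^{\kappa})$ is not automatic; one really needs the opposite inequality in the limit, and in \cite{GPGap} this comes from an integral/flux identity for $\Delta^M r$ (divergence theorem on $M_p^R$) rather than from a bare L'H\^opital step. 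Second, the ``asymptotic to a totally geodesic slice'' heuristic is the right picture, but the actual argument in \cite{GPGap} does not literally write each end as a normal graph and integrate a Jacobi equation; it controls the quotient $V(t)/\Vol(S_t^{\kappa})$ directly via the evolution of the level-set area under the flow of $\grad r/|\grad r|^2$, where the decay hypothesis on $|A|$ with exponent $2\sqrt{-\kappa}$ is exactly what makes the error terms integrable against the exponential growth of $\Vol(S_t^{\kappa})$. Your sketch is a reasonable outline, but the graph/Jacobi step as written is more of a motivation than a proof.
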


\section{Proof of the Main Theorem: volume growth behavior and fundamental tone}\label{main-prop}
The way to proof the equality (\ref{equality}) in the Main Theorem is to obtain the same upper and lower bound for the fundamental tone.
Hence, first of all, we need lower bounds for the fundamental tone. But the lower bounds are well known, and it is straight forward in a similar way to Theorem  \ref{Che-Leu} that
$$
\lambda^*(M)\geq \frac{-(n-1)^2\K}{4}\quad .
$$

\noindent The above lower bound  can also be  proved  using the Cheeger isoperimetric constant of the submanifold. Taking into account that  the Cheeger constant $h(M)$ of a minimal submanifold $M^n$ properly immersed in a Cartan-Hadamard manifold $N$ of sectional curvatures $K_N$ bounded from above by $K_N\leq \K\leq 0$ is (see \cite{GPCheeger})
\begin{equation}
h(M)\geq (n-1)\sqrt{-\K}\quad,
\end{equation}

\noindent therefore,
\begin{equation}
\lambda^*(M)\geq\frac{h(M)^2}{4}\geq  \frac{-(n-1)^2\K}{4}\quad .
\end{equation}

To obtain the upper bounds for the fundamental tone, we use the Rayleigh quotient definition (\ref{Ray}) with an appropriate testing function.
The first step to obtain the testing function is to  define  the real function $\phi: \erre \to \erre$, given  by

\begin{equation}\label{testing}
\phi(t)=
\begin{cases} 
f(t) \quad\text { if  } t\in [\frac{R}{2},R]\quad,\\
0 \quad \text{ otherwise}	
\end{cases}
\end{equation}

where the function $f:\erre \to\erre$ is
\begin{equation}
f(s)=\frac{\sin(\frac{2\pi(s-\frac{R}{2})}{R})}{\Ss^{1/2}}\quad.
\end{equation}

Now, the second step to take to construct the testing function $\Phi$, is to transplant $\phi$ to $M$ using the extrinsic distance function by the following definition:
$$
\Phi:M\to \erre;\quad \Phi(x)=\phi(r_p(x))\quad.
$$  

By the  Rayleigh quotient definition and the coarea formula
\begin{equation}\label{ray-eq}
\begin{aligned}
\lambda^*(M)\leq &\frac{\int_M \langle \nabla \Phi,\nabla \Phi\rangle d\mu }{\int_M \Phi^2 d\mu}=\frac{\int_M (\phi')^2\langle \nabla r_p,\nabla r_p\rangle d\mu }{\int_M \phi^2 d\mu}\leq \frac{\int_M (\phi')^2  d\mu }{\int_M \phi^2 d\mu}\\
=&\frac{\int_0^R \left[\int_{\partial M_p^s}\frac{(\phi')^2}{\vert \nabla r\vert}  \right]ds}{\int_0^R \left[\int_{\partial M_p^s}\frac{\phi^2}{\vert \nabla r\vert}  \right]ds}=\frac{\int_{\Rd}^R (\phi'(s))^2\left[\int_{\partial M_p^s}\frac{1}{\vert \nabla r\vert}  \right]ds}{\int_{\Rd}^R \phi^2(s)\left[\int_{\partial M_p^s}\frac{1}{\vert \nabla r\vert}  \right]ds}\\
=&\frac{\int_{\Rd}^R (\phi'(s))^2\left(\Vol(M_p^s)\right)'ds}{\int_{\Rd}^R \phi^2(s)\left(\Vol(M_p^s)\right)'ds}\quad.
\end{aligned}
\end{equation}

Using the following two lemmas
\begin{lemma}\label{volume-comparison}
\begin{equation}
\Q(s)\Ss\leq \left(\Vol(M_p^s)\right)'= \left(\ln\Q(s) \right)'\Bs\Q(s)+\Q(s) \Ss\quad .
\end{equation}
\end{lemma}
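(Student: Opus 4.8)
The plan is to read the identity off directly from the definition of the volume growth function $\Q$, and then to deduce the stated lower bound from the monotonicity of $\Q$ (Proposition \ref{Palmer}).

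First I would recall that, by the definition of the volume growth function, $\Vol(M_p^s)=\Q(s)\Bs$, and that in the real space form $\kan$ one has $\frac{d}{ds}\Bs=\Ss$, since $\Br=\int_0^R\Ss\,ds$. Because the extrinsic balls $M_p^s$ are nested, the function $s\mapsto\Vol(M_p^s)$ is monotone non-decreasing, hence differentiable for almost every $s>0$; by Proposition \ref{Palmer} the same is true of $\Q(s)$, and the coarea formula identifies $\left(\Vol(M_p^s)\right)'=\int_{\partial M_p^s}\frac{1}{\vert\nabla r\vert}$ at a.e.\ $s$, which is exactly the quantity entering (\ref{ray-eq}). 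Differentiating the product $\Q(s)\Bs$ then yields $\left(\Vol(M_p^s)\right)'=\Q'(s)\Bs+\Q(s)\Ss$, and rewriting $\Q'(s)=\left(\ln\Q(s)\right)'\Q(s)$, which is legitimate since $\Q(s)>0$ for $s>0$, gives the claimed equality $\left(\Vol(M_p^s)\right)'=\left(\ln\Q(s)\right)'\Bs\Q(s)+\Q(s)\Ss$.

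For the inequality it suffices to note that, by the monotonicity formula (Proposition \ref{Palmer}), $\Q$ is non-decreasing, so $\left(\ln\Q(s)\right)'\geq 0$ wherever it is defined; since $\Bs\geq 0$ and $\Q(s)\geq 0$, the first summand on the right-hand side is non-negative, whence $\Q(s)\Ss\leq\left(\Vol(M_p^s)\right)'$.

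I expect the only delicate point to be the justification of the formal differentiation, i.e.\ that $\Vol(M_p^s)$ and $\Q(s)$ are almost everywhere differentiable with derivative given by the coarea formula; this is precisely what is already needed to make sense of the Rayleigh-quotient computation (\ref{ray-eq}), and it follows from the monotonicity of both functions together with Sard's theorem applied to the Lipschitz function $r_p$. Everything else reduces to the elementary product-rule manipulation above.
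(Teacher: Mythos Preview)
Your proof is correct and follows essentially the same approach as the paper: both differentiate $\Vol(M_p^s)=\Q(s)\Bs$ (the paper via the logarithmic derivative of the quotient, you via the product rule, which amounts to the same computation) and then invoke the monotonicity of $\Q$ from Proposition \ref{Palmer} to conclude that the $(\ln\Q)'$-term is non-negative. The paper's version is terser and omits your remarks on a.e.\ differentiability and the coarea formula, but there is no substantive difference.
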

\begin{proof}
From the definition of $\Q$ and taking into account that $\Q$ is a non-decreasing function (by proposition \ref{Palmer})
\begin{equation}
\left(\ln \Q(s)\right)'=\frac{\left(\Vol M_p^s\right)'}{\left(\Vol M_p^s\right)}-\frac{\Ss}{\Bs}\geq 0\quad .
\end{equation}
So, 
\begin{equation}
\Q(s)\Ss\leq \left(\Vol(M_p^s)\right)'=\left(\ln\Q(s) \right)'\Bs\Q(s)+\Q(s) \Ss\quad .
\end{equation}
$\qedd$
\end{proof}

\begin{lemma}\label{testing-prop}
There exists an upper bound function $\Lambda: \erre^+\to \erre^+$
such that:
\begin{equation}
\frac{\int_0^R (\phi')^2 \Ss ds}{\int_0^R \phi^2 \Ss ds}\leq \Lambda(R)\quad,
\end{equation}
and
\begin{equation}
\lim_{R\to \infty}\Lambda(R)=\frac{-(m-1)\kappa}{4}
\end{equation}
\end{lemma}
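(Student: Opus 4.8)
The plan is to reduce the statement to an elementary one–variable estimate on $[\Rd,R]$, off which $\phi$ vanishes. First I would differentiate the test function explicitly. Writing $u=u(s):=\frac{2\pi(s-\Rd)}{R}$ and using that $\frac{d}{ds}\log\Ss^{1/2}=\frac{m-1}{2}\,\Ct(s)$ — immediate from $\Ss=\omega_m\,\text{S}_\kappa(s)^{m-1}$ together with $\Ct(s)=\text{S}_\kappa'(s)/\text{S}_\kappa(s)$ — one gets, on $(\Rd,R)$,
\[
\phi'(s)=\left(\frac{2\pi}{R}\cos u-\frac{m-1}{2}\,\Ct(s)\sin u\right)\Ss^{-1/2},
\]
so that $(\phi')^2\,\Ss=\left(\tfrac{2\pi}{R}\cos u-\tfrac{m-1}{2}\,\Ct(s)\sin u\right)^{2}$ while $\phi^2\,\Ss=\sin^2 u$. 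Normalizing $f$ by $\Ss^{1/2}$ is exactly what cancels the weight from both integrands, leaving purely trigonometric integrands times the bounded factor $\Ct$.

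Next I would compute the denominator exactly: the substitution $v=u(s)$ gives $\int_{\Rd}^{R}\sin^2 u\,ds=\frac{R}{2\pi}\int_0^{\pi}\sin^2 v\,dv=\frac R4$, and likewise $\int_{\Rd}^{R}\cos^2 u\,ds=\frac R4$. For the numerator I would expand the square into three pieces and bound each. The $\cos^2 u$ piece contributes $\frac{4\pi^2}{R^2}\cdot\frac R4=\frac{\pi^2}{R}$. For the cross piece $-\frac{2\pi(m-1)}{R}\int_{\Rd}^{R}\Ct(s)\cos u\sin u\,ds$ I would write $\cos u\sin u=\tfrac12\sin 2u$, observe that $-\frac{R}{8\pi}\cos 2u$ is a primitive bounded by $\frac{R}{8\pi}$, and integrate by parts; since $\Ct$ is smooth and monotone non-increasing on $(0,\infty)$ with $\int_{\Rd}^{R}|\Ct'|=\Ct(\Rd)-\Ct(R)$, the cross piece has absolute value at most $\frac{m-1}{2}\bigl(\Ct(\Rd)-\Ct(R)\bigr)$. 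For the $\sin^2 u$ piece, monotonicity of $\Ct$ yields $\int_{\Rd}^{R}\Ct(s)^2\sin^2 u\,ds\le\Ct(\Rd)^2\cdot\frac R4$, hence a contribution at most $\frac{(m-1)^2}{4}\,\Ct(\Rd)^2\cdot\frac R4$.

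Dividing this bound on the numerator by the denominator $\frac R4$, I would then take
\[
\Lambda(R):=\frac{4\pi^2}{R^2}+\frac{2(m-1)\bigl(\Ct(\Rd)-\Ct(R)\bigr)}{R}+\frac{(m-1)^2}{4}\,\Ct(\Rd)^2 ,
\]
which by construction dominates $\dfrac{\int_0^R(\phi')^2\Ss\,ds}{\int_0^R\phi^2\Ss\,ds}$. Since $\Ct(t)=\sqrt{-\kappa}\coth(\sqrt{-\kappa}\,t)$ decreases to $\sqrt{-\kappa}$ as $t\to\infty$, the first two summands of $\Lambda(R)$ tend to $0$ and $\Ct(\Rd)^2\to-\kappa$, so $\lim_{R\to\infty}\Lambda(R)=\frac{-(m-1)^2\kappa}{4}$ — the constant that then propagates through the Rayleigh estimate to yield the Main Theorem.

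I do not foresee a genuine obstruction; the single delicate point is the cross piece. Bounding it crudely via $|\cos u\sin u|\le1$ would leave a term of order $R$, the same order as the dominant $\sin^2 u$ piece, and would destroy the sharp limit. One must instead exploit the vanishing mean of $\sin 2u$ — which is precisely what the integration by parts achieves — and, together with the fact that $\Ct$ has exponentially small total variation on $[\Rd,R]$, conclude that the cross piece is genuinely of lower order, so that the sharp constant $\frac{-(m-1)^2\kappa}{4}$ survives in the limit.
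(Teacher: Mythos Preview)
Your argument is correct and follows the same overall strategy as the paper: compute $(\phi')^2\Ss$ explicitly, expand the square, and bound each piece using the monotonicity of $\Ct$, with the denominator $\int_{\Rd}^{R}\sin^2 u\,ds=\tfrac{R}{4}$ computed exactly.

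The one point worth flagging is your final paragraph. Your worry that the crude bound $|\cos u\sin u|\le 1$ would ``leave a term of order $R$'' and ``destroy the sharp limit'' is unfounded, and in fact the paper uses precisely this crude bound. The cross term already carries the prefactor $\tfrac{2\pi}{R}$ coming from the derivative of $\cos u$, so bounding $|\cos u\sin u|\le 1$ and $\Ct(s)\le\Ct(\Rd)$ gives a numerator contribution of at most $\tfrac{2\pi(m-1)}{R}\,\Ct(\Rd)\cdot\tfrac{R}{2}=\pi(m-1)\,\Ct(\Rd)$, which is $O(1)$, not $O(R)$; after dividing by $\tfrac{R}{4}$ one gets $\tfrac{4\pi(m-1)}{R}\,\Ct(\Rd)\to 0$. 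Your integration-by-parts treatment of the cross piece is perfectly valid and yields a slightly sharper $\Lambda(R)$, but it is not needed for the lemma, and the paper's simpler estimate already gives the correct limit $\tfrac{-(m-1)^2\kappa}{4}$.
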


\begin{proof}

\begin{equation}
\begin{aligned}
\frac{\int_0^R (\phi'(s))^2 \Ss ds}{\int_0^R \phi(s)^2 \Ss
  ds}=\frac{\int_{\Rd}^R (f'(s))^2 \Ss ds}{\int_{\Rd}^R f(s)^2 \Ss
  ds}\quad .
\end{aligned}
\end{equation}

But
\begin{equation}
\begin{aligned}
\left(f'(s)\right)^2=&\frac{\left(-\frac{m-1}{2}\Ct (s)\sin(\frac{2\pi(s-\Rd)}{R})+\frac{2\pi}{R}\cos(\frac{2\pi(s-\Rd)}{R})\right)^2}{\Ss}\\
\leq &
\frac{\frac{(m-1)^2}{4}\Ct(s)^2\sin(\frac{2\pi(s-\Rd)}{R})^2+\frac{4\pi^2}{R^2}+\frac{2(m-1)\pi}{R}\Ct
(s) }{\Ss}\quad.
\end{aligned}
\end{equation}
And,  since $\Ct(t)$ is a non-increasing function and $ \int_{\Rd}^R \sin(\frac{2\pi(s-\Rd)}{R})^2 ds=\frac{R}{4}$
\begin{equation}
\begin{aligned}
\frac{\int_0^R (\phi'(s))^2 \Ss ds}{\int_0^R \phi(s)^2 \Ss ds}
\leq & 
\frac{(m-1)^2}{4}\Ct(R/2)^2+\frac{8\pi^2}{R^2}\\&+\frac{4\pi(m-1)}{R}\Ct(R/2)\quad. 
\end{aligned}
\end{equation}
Then, letting 
\begin{equation}
\Lambda(R):=\frac{(m-1)^2}{4}\Ct(R/2)^2+\frac{8\pi^2}{R^2}+\frac{4\pi(m-1)}{R}\Ct(R/2)\quad,
\end{equation}
and taking into account that
\begin{equation}
\lim_{R\to\infty}\Ct (R/2)=\sqrt{-\kappa}\quad,
\end{equation}
the lemma is proven. $\qedd$
\end{proof}

\noindent Denoting now,  $F(R):=\left(\frac{(m-1)^2}{4}\Ct(R/2)^2+\frac{4\pi^2}{R^2}+\frac{2(m-1)\pi}{R}\Ct(R/2)\right)$ and $\delta(R):=\int_{\Rd}^R \left(\ln\Q(s) \right)' ds$, applying the lemma \ref{volume-comparison} and the lemma \ref{testing-prop} to inequality (\ref{ray-eq}) we get since $\frac{\Bs}{\Ss}$ is a non-decreasing function

\begin{equation*}
\begin{aligned}
\lambda^*(M)\leq  &\frac{\Q(R)}{\Q(\Rd)}\frac{\int_{\Rd}^R (\phi'(s))^2\left(\ln\Q(s) \right)'\Bs ds+\int_{\Rd}^R (\phi'(s))^2\Ss ds}{\int_{\Rd}^R \phi^2(s)\Ss ds}\\
\leq& \frac{\Q(R)}{\Q(\Rd)}\left(\frac{4}{R}\int_{\Rd}^R (\phi'(s))^2\left(\ln\Q(s) \right)'\Bs ds+  \Lambda(R)\right)\\
\leq &\frac{\Q(R)}{\Q(\Rd)}\left[\frac{\Br}{\Sr} \frac{4}{R}F(R)\delta(R)+ \Lambda(R)\right]
\end{aligned}
\end{equation*}
Letting $R$ tend to infinity and taking into account that
\begin{equation}
\begin{aligned}
\lim_{R\to\infty}F(R)=&-\frac{(m-1)^2\K}{4}\quad ,\\
\lim_{R\to\infty}\delta(R)=&0 \quad ,\\
\lim_{R\to\infty}\frac{\Br}{\Sr} \frac{4}{R}=&\begin{cases}  
\frac{4}{m-1} \text{ if } \K=0,\\
0\text{  if } b<0.
\end{cases}
\\
\lim_{R\to\infty}\frac{\Q(R)}{\Q(\Rd)}=&1\quad ,\\
\lim_{R\to\infty}\Lambda(R)=&-\frac{(m-1)^2\K}{4}\quad .
\end{aligned}
\end{equation}
\noindent  we conclude the proof of the theorem. 
\begin{remark}
Observe that the finiteness of the volume growth function 
\begin{equation}\label{hypothesis}
\lim_{R\to\infty}\Q(R)<\infty\quad,
\end{equation}
is only used in the proof of the Main Theorem to achieve 
\begin{equation}\label{hypothesis2}
\lim_{R\to\infty}\frac{\Q(R)}{\Q(\Rd)}=1\quad .
\end{equation}
Therefore, we could use the slightly weaker assumption on the volume growth given by the limit  (\ref{hypothesis2}) instead  the assumption on the finite volume growth of the submanifold (limit (\ref{hypothesis})), or use an extrinsic doubling property to obtain two sides estimates for the fundamental tone
\begin{theoremA}\label{two-sides}
Let $M^n$ be a $n-$dimensional minimal submanifold properly immersed in a simply connected Cartan-Hadamard manifold $N$ of sectional curvature $K_N$ bounded from above by  $K_N\leq \kappa\leq
0$. Suppose that the immersion has an extrinsic doubling property, namely
\begin{equation}
\frac{\Q(R)}{\Q(\Rd)}<C\quad.
\end{equation}
Then,
\begin{equation}
-\frac{(m-1)^2\K}{4}\leq \lambda^*(M)\leq-\frac{C(m-1)^2\K}{4}\quad .
\end{equation}
\end{theoremA}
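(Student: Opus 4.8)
The plan is to re-use the proof of the Main Theorem essentially verbatim, changing only the single place where finiteness of the volume growth was invoked. For the lower bound $\lambda^*(M)\geq -\frac{(m-1)^2\K}{4}$ I would appeal to exactly the argument at the start of this section, which uses no hypothesis on $\Q$ at all: either one proceeds directly as in Theorem~\ref{Che-Leu}, or one uses the Cheeger constant bound $h(M)\geq (n-1)\sqrt{-\K}$ from \cite{GPCheeger} together with $\lambda^*(M)\geq h(M)^2/4$.

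For the upper bound I would keep the same transplanted test function $\Phi(x)=\phi(r_p(x))$ built from \eqref{testing}, and reproduce the chain of inequalities \eqref{ray-eq} together with Lemma~\ref{volume-comparison} and Lemma~\ref{testing-prop}. With $F(R)$ and $\delta(R)$ as defined after Lemma~\ref{testing-prop}, this produces, for every $R>0$,
\begin{equation*}
\lambda^*(M)\leq \frac{\Q(R)}{\Q(\Rd)}\left[\frac{\Br}{\Sr}\,\frac{4}{R}\,F(R)\,\delta(R)+\Lambda(R)\right].
\end{equation*}
At this step the Main Theorem lets $R\to\infty$ and uses $\Q(R)/\Q(\Rd)\to 1$; here I would instead invoke the doubling hypothesis $\Q(R)/\Q(\Rd)<C$, pull $C$ out in front, and let $R\to\infty$ only inside the bracket. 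Since (as recorded in the Main Theorem's proof) $\Lambda(R)\to -\frac{(m-1)^2\K}{4}$, $F(R)\to -\frac{(m-1)^2\K}{4}$, and $\frac{\Br}{\Sr}\frac{4}{R}$ tends to $\frac{4}{m-1}$ if $\K=0$ and to $0$ if $\K<0$, the bracket tends to $-\frac{(m-1)^2\K}{4}$, and letting $R\to\infty$ gives
\begin{equation*}
\lambda^*(M)\leq -\frac{C(m-1)^2\K}{4}.
\end{equation*}
Combined with the lower bound, this is the asserted two-sided estimate.

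The one point that needs care — and it is both the reason $C$ appears and the only genuine difference from the Main Theorem — is the cross term $\frac{\Br}{\Sr}\frac{4}{R}F(R)\delta(R)$. In the Main Theorem $\Q$ is bounded, so $\delta(R)=\ln\bigl(\Q(R)/\Q(\Rd)\bigr)\to 0$; under mere doubling $\Q$ may be unbounded (for instance $\Q(R)\sim R$ satisfies doubling with $C=2$) and $\delta(R)$ need not vanish. What I would use instead is that $0\leq \delta(R)<\ln C$ stays bounded, while the coefficient $\frac{\Br}{\Sr}\frac{4}{R}F(R)$ in front of it tends to $0$ — when $\K<0$ because $\frac{\Br}{\Sr}\frac{4}{R}\to 0$, and when $\K=0$ because $F(R)\to 0$. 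Hence the cross term still vanishes in the limit and nothing else in the Main Theorem's computation is affected, so the argument goes through with the constant $C$ carried along.
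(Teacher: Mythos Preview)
Your proposal is correct and follows exactly the approach the paper indicates in the remark: reuse the Main Theorem's chain of inequalities and replace the limit $\Q(R)/\Q(\Rd)\to 1$ by the doubling bound $\Q(R)/\Q(\Rd)<C$. Your observation that the cross term $\frac{\Br}{\Sr}\frac{4}{R}F(R)\delta(R)$ still tends to zero because $\delta(R)<\ln C$ stays bounded while its coefficient vanishes (via $\frac{4}{R}\cdot\frac{\Br}{\Sr}\to 0$ for $\K<0$ and $F(R)\to 0$ for $\K=0$) is exactly the detail needed to make the paper's one-line remark rigorous, and it is handled correctly.
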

\end{remark}



\begin{thebibliography}{10}
\providecommand{\url}[1]{{#1}}
\providecommand{\urlprefix}{URL }
\expandafter\ifx\csname urlstyle\endcsname\relax
  \providecommand{\doi}[1]{DOI~\discretionary{}{}{}#1}\else
  \providecommand{\doi}{DOI~\discretionary{}{}{}\begingroup
  \urlstyle{rm}\Url}\fi

\bibitem{A1}
Anderson, M.T.: The compactification of a minimal submanifold in Euclidean
  space by the Gauss map (1984)

\bibitem{Pac2}
Bessa, G.P., Costa, M.S.: On submanifolds with tamed second fundamental form.
\newblock Glasg. Math. J. \textbf{51}(3), 669--680 (2009).

\bibitem{Candel}
Candel, A.: Eigenvalue estimates for minimal surfaces in hyperbolic space.
\newblock Trans. Amer. Math. Soc. \textbf{359}(8), 3567--3575 (electronic)
  (2007).
\newblock \doi{10.1090/S0002-9947-07-04104-9}.
\newblock \urlprefix\url{http://dx.doi.org/10.1090/S0002-9947-07-04104-9}

\bibitem{DoDa}
do~Carmo, M., Dajczer, M.: Rotation hypersurfaces in spaces of constant
  curvature.
\newblock Trans. Amer. Math. Soc. \textbf{277}(2), 685--709 (1983).

\bibitem{Chavel}
Chavel, I.: Eigenvalues in {R}iemannian geometry, \emph{Pure and Applied
  Mathematics}, vol. 115.
\newblock Academic Press Inc., Orlando, FL (1984).
\newblock Including a chapter by Burton Randol, With an appendix by Jozef
  Dodziuk

\bibitem{ChSubv}
Chen, Q.: On the volume growth and the topology of complete minimal
  submanifolds of a {E}uclidean space.
\newblock J. Math. Sci. Univ. Tokyo \textbf{2}(3), 657--669 (1995)

\bibitem{ChOss}
Chern, S.s., Osserman, R.: Complete minimal surfaces in Euclidean {$n$}-space.
\newblock J. Analyse Math. \textbf{19}, 15--34 (1967)

\bibitem{Cheung-Leung}
Cheung, L.F., Leung, P.F.: Eigenvalue estimates for submanifolds with bounded
  mean curvature in the hyperbolic space.
\newblock Math. Z. \textbf{236}(3), 525--530 (2001).

\bibitem{Esteve-Palmer}
Esteve, A., Palmer, V.: Chern-Osserman inequality for minimal surfaces in a
  cartan-hadamard manifold with strictly negative sectional curvatures (2011).
\newblock Accepted in Arkiv f\"or Matematik.

\bibitem{GPCheeger}
Gimeno, V., Palmer, V.: Volume growth of submanifolds and the Cheeger
  isoperimetric constant (2011) .
\newblock To appear in Proceedings of the AMS.

\bibitem{GP}
Gimeno, V., Palmer, V.: Extrinsic isoperimetry and compactification of minimal
  surfaces in Euclidean and hyperbolic spaces.
\newblock Israel Journal of Mathematics
\textbf{2}(194), 539--553 (2013).


\bibitem{GPGap}
Gimeno, V., Palmer, V.: Volume growth, number of ends, and the topology of a
  complete submanifold.
\newblock Journal of Geometric Analysis pp. 1--22 (2012).
\newblock \doi{10.1007/s12220-012-9376-3}.
\newblock \urlprefix\url{http://dx.doi.org/10.1007/s12220-012-9376-3}

\bibitem{Tubes}
Gray, A.: Tubes, \emph{Progress in Mathematics}, vol. 221, second edn.
\newblock Birkh\"auser Verlag, Basel (2004).
\newblock With a preface by Vicente Miquel

\bibitem{McKean}
McKean, H.: {An upper bound to the spectrum of $\Delta$ on a manifold of
  negative curvature.}
\newblock J. Differ. Geom. \textbf{4}, 359--366 (1970)

\bibitem{Palmer}
Palmer, V.: Isoperimetric inequalities for extrinsic balls in minimal
  submanifolds and their applications.
\newblock J. London Math. Soc. (2) \textbf{60}(2), 607--616 (1999).

\bibitem{Che3}
Qing, C., Yi, C.: Chern-Osserman inequality for minimal surfaces in
  $\mathbb{H}^n$.
\newblock Proc. Amer. Math Soc. \textbf{128}, 2445--2450 (1999)

\bibitem{Seo}
Seo, K.: Stable minimal hypersurfaces in the hyperbolic space.
\newblock J. Korean Math. Soc. \textbf{48}(2), 253--266 (2011).

\bibitem{Shi}
Shi, Y., Tian, G.: Rigidity of asymptotically hyperbolic manifolds.
\newblock Comm. Math. Phys. \textbf{259}(3), 545--559 (2005).

\bibitem{Tkachev}
Tkachev, V.G.: Finiteness of the number of ends of minimal submanifolds in
  {E}uclidean space.
\newblock Manuscripta Math. \textbf{82}(3-4), 313--330 (1994).

\bibitem{YauReview}
Yau, S.T.: Review of geometry and analysis.
\newblock Asian J. Math. \textbf{4}(1), 235--278 (2000).
\newblock Kodaira's issue

\end{thebibliography}

\providecommand{\noopsort}[1]{}\providecommand{\singleletter}[1]{#1}\def\cprime{$'$} \def\cprime{$'$} \def\cprime{$'$} \def\cprime{$'$}

\end{document}